\documentclass[11pt]{amsart}

\usepackage{amscd}
\usepackage{amsmath, amssymb}
\usepackage{amsfonts}
\newcommand{\de}{\partial}

\newcommand{\ov}[1]{\overline{#1}}

\newcommand{\ti}[1]{\tilde{#1}}
\newcommand{\vp}{\varphi}

\newcommand{\ve}{\varepsilon}

\renewcommand{\leq}{\leqslant}
\renewcommand{\geq}{\geqslant}

\newcommand{\be}{\begin{equation}}
\newcommand{\ee}{\end{equation}}

\begin{document}
\newtheorem{claim}{Claim}
\newtheorem{theorem}{Theorem}[section]
\newtheorem{lemma}[theorem]{Lemma}
\newtheorem{corollary}[theorem]{Corollary}
\newtheorem{proposition}[theorem]{Proposition}
\newtheorem{question}{question}[section]
\newtheorem{definition}[theorem]{Definition}
\newtheorem{remark}[theorem]{Remark}

\numberwithin{equation}{section}

\title[Curvature estimates]
{Curvature estimates for a class of Hessian type equations}

\author[J. Chu]{Jianchun Chu}
\address{Department of Mathematics, Northwestern University, 2033 Sheridan Road, Evanston, IL 60208}
\email{jianchun@math.northwestern.edu}

\author[H. Jiao]{Heming Jiao}
\address{School of Mathematics and Institute for Advanced Study in Mathematics, Harbin Institute of Technology,
         Harbin, Heilongjiang, 150001, China}
\email{jiao@hit.edu.cn}

\begin{abstract}
In this paper, we establish the curvature estimates for a class of Hessian type equations. Some applications are also discussed.
\end{abstract}

\maketitle

\section{Introduction}
Suppose that $M$ is a hypersurface in $(n+1)$-dimensional Euclidean space $\mathbb{R}^{n+1}$. Let $\kappa (X)$, $H(X)$ and $\nu (X)$
be the principal curvatures,  mean curvature and unit outer normal at $X \in M$ respectively. Define the $(0, 2)$-tensor field $\eta$ on $M$ by
\[
\eta_{ij} = H g_{ij}-h_{ij},
\]
where $g_{ij}$ and $h_{ij}$ are the first and second fundamental forms of $M$ respectively.
In fact, $\eta$ is the first Newton transformation of $h$ with respect to $g$. Using $\lambda(\eta)$ to denote the eigenvalues of $\eta$ (with respect to $g$), we see that
\[
\lambda_{i} = H-\kappa_{i} = \sum_{j\neq i}\kappa_{j}, \quad \text{for $i = 1,2 \cdots, n$}.
\]

In this paper, we consider the $k$-Hessian equation of $\lambda(\eta)$, i.e.,
\begin{equation}
\label{cj-1}
\sigma_k (\lambda (\eta)) = f (X, \nu (X)), \quad \text{for $X \in M$},
\end{equation}
where $\sigma_{k}$ is the $k$-th elementary symmetric function
\[
\sigma_{k} (\lambda) = \sum_ {i_{1} < \cdots < i_{k}}
\lambda_{i_{1}} \cdots \lambda_{i_{k}}, \quad \text{for $k=1,2,\cdots,n$}.
\]
To study equation (\ref{cj-1}), we introduce the following elliptic condition.
\begin{definition}
A $C^2$ regular hypersurface $M \subset \mathbb{R}^{n+1}$ is called $(\eta, k)$-convex if $\lambda(\eta) \in \Gamma_k$ for all $X \in M$,
where $\Gamma_k$ is the G{\aa}rding cone
\[
\Gamma_{k} = \{\lambda \in \mathbb{R}^{n}~|~ \sigma_{j} (\lambda) > 0, \  j = 1,2,\cdots, k\}.
\]
\end{definition}

Our main result is the following curvature estimate for equation \eqref{cj-1}.
\begin{theorem}
\label{Curvature estimate}
Let $M$ be a closed star-shaped $(\eta, k)$-convex hypersurface satisfying the curvature equation \eqref{cj-1}
for some positive function $f\in C^{2}(\Gamma)$, where $\Gamma$ is an open neighborhood of the unit normal bundle
of $M$ in $\mathbb{R}^{n+1}\times\mathbb{S}^{n}$. Then there exists a constant $C$ depending only
$n$, $k$, $\|M\|_{C^{1}}$, $\inf f$ and $\|f\|_{C^{2}}$ such that
\begin{equation}
\label{cj-3}
\max_{X\in M,~i=1,\cdots,n}|\kappa_{i}(X)| \leq C.
\end{equation}
\end{theorem}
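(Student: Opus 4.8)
The plan is to follow the by-now-standard route for $C^{2}$ (curvature) estimates of prescribed Weingarten curvature equations. First I would use that $M$ is closed and star-shaped to write it as a radial graph $X=\rho(z)z$ over $\mathbb{S}^{n}$; the bound on $\|M\|_{C^{1}}$ then controls $\rho$ and its gradient and, in particular, gives two-sided bounds $0<c_{0}\le u\le C_{0}$ for the support function $u:=\langle X,\nu\rangle$. The estimate \eqref{cj-3} reduces to an \emph{upper} bound for the largest principal curvature $\kappa_{\max}$. Indeed, once $\kappa_{\max}\le C$: if $k=1$ the equation reads $(n-1)H=f$, so $H$ is bounded and each $\kappa_{i}=H-\sum_{j\ne i}\kappa_{j}\ge\frac{\inf f}{n-1}-(n-1)C$; if $k\ge2$ then $\lambda(\eta)\in\Gamma_{k}\subseteq\Gamma_{2}$, so $|\lambda(\eta)|^{2}=\sigma_{1}(\lambda(\eta))^{2}-2\sigma_{2}(\lambda(\eta))<\sigma_{1}(\lambda(\eta))^{2}=(n-1)^{2}H^{2}$, and since $\sigma_{1}(\lambda(\eta))=(n-1)H>0$ and $H\le n\kappa_{\max}$ we get $|\kappa_{i}|=|H-\lambda_{i}(\eta)|\le nH\le n^{2}C$.

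To bound $\kappa_{\max}$ from above I would apply the maximum principle to a test function $W=\log\kappa_{\max}+\varphi$, where $\varphi$ is a suitable function of the position $X$ and of $u$ built from $\|M\|_{C^{1}}$-controlled quantities (after the usual device one may assume that at a maximum point $X_{0}$ the eigenvalue $\kappa_{\max}=h_{11}$ is simple, working in a frame diagonalizing $h$ at $X_{0}$ with $\kappa_{1}\ge\cdots\ge\kappa_{n}$). Setting $\hat F^{ij}=\partial\sigma_{k}(\lambda(\eta))/\partial\eta_{ij}$ and using $\eta_{ij}=Hg_{ij}-h_{ij}$, $H=g^{pq}h_{pq}$, the linearization of \eqref{cj-1} in $h$ is $F^{ij}=(\operatorname{tr}\hat F)g^{ij}-\hat F^{ij}$; at $X_{0}$ it is diagonal with $F^{ii}=\sum_{p\ne i}\sigma_{k-1}(\lambda(\eta)\,|\,p)>0$ on $(\eta,k)$-convex hypersurfaces, so \eqref{cj-1} is elliptic in $h$, and since $\sigma_{k}^{1/k}$ is concave on $\Gamma_{k}$ while $\eta$ depends linearly on $h$, the operator $h\mapsto\sigma_{k}(\lambda(\eta))^{1/k}$ is concave, giving $F^{ij,kl}\nabla_{1}h_{ij}\nabla_{1}h_{kl}\le\frac{k-1}{k}(F^{ij}\nabla_{1}h_{ij})^{2}/f$. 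Two identities help: $F^{ij}\nabla_{m}h_{ij}=\nabla_{m}f$ (differentiating \eqref{cj-1}) and $F^{ij}h_{ij}=kf$ (Euler's relation with $\eta_{ij}=Hg_{ij}-h_{ij}$). At $X_{0}$ one has $\nabla_{i}W=0$ and $F^{ij}\nabla_{i}\nabla_{j}W\le0$; expanding $\nabla^{2}\log h_{11}$, commuting derivatives via the Codazzi equations and the Simons-type identity $\nabla_{i}\nabla_{i}h_{11}=\nabla_{1}\nabla_{1}h_{ii}+\kappa_{1}\kappa_{i}^{2}-\kappa_{1}^{2}\kappa_{i}$, and differentiating \eqref{cj-1} twice along $e_{1}$ (the $\nu$-dependence of $f$ contributing $\nabla h$- and $h^{2}$-terms via $\nabla_{1}\nu=h_{1m}e_{m}$), the resulting inequality takes the schematic form
\[
\sum_{i}F^{ii}\kappa_{i}^{2}-\kappa_{1}\sum_{i}F^{ii}\kappa_{i}+F^{ij}\nabla_{i}\nabla_{j}\varphi+\frac{\nabla_{1}\nabla_{1}f}{\kappa_{1}}-\frac{k-1}{k}\frac{(\nabla_{1}f)^{2}}{f\,\kappa_{1}}\le\sum_{i}\frac{F^{ii}(\nabla_{i}h_{11})^{2}}{\kappa_{1}^{2}},
\]
in which $\sum_{i}F^{ii}\kappa_{i}=kf$, $\nabla_{1}\nabla_{1}f=O(\kappa_{1}^{2})$, and $F^{ij}\nabla_{i}\nabla_{j}\varphi$ supplies, besides bounded and $O(\kappa_{1})$ terms, a usable positive contribution (e.g.\ a multiple of $\operatorname{tr}F$, if $\varphi$ contains a term $\tfrac{A}{2}|X|^{2}$, or $\varphi''\sum_{i}F^{ii}(\nabla_{i}u)^{2}$).

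The main obstacle is to dominate the remaining third-order terms — the gradient term $\sum_{i}F^{ii}(\nabla_{i}h_{11})^{2}/\kappa_{1}^{2}$ on the right, together with the part of $(\nabla_{1}f)^{2}/(f\kappa_{1})$ not already cancelled by concavity: a naive choice of $\varphi$ fails precisely because, after using $\nabla_{i}W=0$ to substitute $\nabla_{i}h_{11}=-\kappa_{1}\nabla_{i}\varphi$, this term becomes comparable to $\operatorname{tr}F$ and competes with the good terms produced by $\varphi$ itself. As in the Guan--Ren--Wang method for $\sigma_{k}$-type curvature equations, I would split the indices according to which $F^{ii}$ are comparable to $F^{11}$: for the "large" indices the Codazzi identity and $\nabla_{i}W=0$ allow one to absorb the corresponding part of the gradient term into first-order quantities, while for the remaining indices one applies a sharpened form of the concavity inequality and exploits the special structure $F^{ii}=\sum_{p\ne i}\sigma_{k-1}(\lambda(\eta)\,|\,p)$ coming from $\eta=Hg-h$. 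Here the G{\aa}rding-cone inequalities for $\lambda(\eta)$ are essential: together with $\sigma_{k}(\lambda(\eta))=f$ being bounded they prevent all the principal curvatures other than $\kappa_{1}$ from remaining bounded once $\kappa_{1}$ is large, which forces the good term $\sum_{i}F^{ii}\kappa_{i}^{2}$ to be large relative to $\operatorname{tr}F$; and the Newton--Maclaurin inequalities (for instance $\sigma_{1}(\lambda(\eta))\ge\sigma_{k}(\lambda(\eta))/\sigma_{k-1}(\lambda(\eta))$) give the lower bounds on $\operatorname{tr}F$ and the control of the negative principal curvatures in terms of $\kappa_{1}$ needed to close the argument. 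Once all error terms are absorbed the displayed inequality forces $\kappa_{1}(X_{0})\le C$; since $W$ attains its maximum at $X_{0}$ and $\varphi$ is bounded in terms of $\|M\|_{C^{1}}$, this gives $\kappa_{\max}\le C$ on $M$, and with the first paragraph this yields \eqref{cj-3}. I expect the case analysis absorbing the third-order terms, and the exploitation of the structure of the operator coming from $\eta=Hg-h$, to be the technically hardest part.
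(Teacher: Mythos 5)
Your overall framework (radial graph, support function $u=\langle X,\nu\rangle$, maximum principle for $\log\kappa_{\max}$ plus lower-order terms, linearized operator $F^{ii}=\sum_{j\neq i}G^{jj}$, concavity of $\sigma_k^{1/k}$) matches the paper, but there are two substantive problems. First, your Simons-type identity has the wrong sign: the commutation formula \eqref{Commutation formula} gives, at a diagonal point, $h_{11ii}=h_{ii11}-h_{11}h_{ii}^{2}+h_{11}^{2}h_{ii}$, not $h_{ii11}+h_{11}h_{ii}^{2}-h_{11}^{2}h_{ii}$. Consequently the quadratic term enters your main inequality as $-\sum_i F^{ii}h_{ii}^{2}$, a \emph{bad} term, rather than the free good term $\sum_i F^{ii}\kappa_i^{2}$ your schematic display shows. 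This term has to be manufactured by the test function: the paper's choice $-\log(u-a)$ with $a=\tfrac12\inf_M u$ produces $+\frac{u}{u-a}F^{ii}h_{ii}^{2}$ from the $-uh_{ii}^{2}$ part of $u_{ii}$, leaving the net good term $\frac{a}{u-a}F^{ii}h_{ii}^{2}$. The substitutes you propose ($A\sum_iF^{ii}$ from $\tfrac A2|X|^{2}$, or $\varphi''\sum_iF^{ii}u_i^{2}$) do not supply it, since $u_i^{2}=h_{ii}^{2}\langle X,e_i\rangle^{2}$ degenerates where $\langle X,e_i\rangle$ vanishes. (The same choice of test function is also what neutralizes the third-order term $\sum_k h_{11k}(d_\nu\tilde f)(e_k)/h_{11}$ coming from the $\nu$-dependence of $f$, via the critical-point equation \eqref{hat Q i}; you only record the $\nu$-dependence as harmless $\nabla h$- and $h^{2}$-contributions.)

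Second, and more importantly, you defer the crux — absorbing $\sum_iF^{ii}h_{11i}^{2}/h_{11}^{2}$ — to "the Guan--Ren--Wang method" of splitting indices by whether $F^{ii}$ is comparable to $F^{11}$. That is precisely the argument which, for the classical $\sigma_k$ curvature equation, requires the extra $(k+1)$-convexity hypothesis; you give no reason it closes under mere $(\eta,k)$-convexity, and removing that hypothesis is the whole point of the theorem. The paper's mechanism is different: it first proves a \emph{partial curvature estimate} (Lemma \ref{Partial curvature estimate}), $|h_{ii}|\le CA$ for all $i\ge2$ at the maximum point, which rests on the structural inequality $F^{ii}\ge\frac{1}{n(n-1)}\sum_kF^{kk}$ for every $i\ge2$ — valid because $F^{ii}=\sum_{j\ne i}G^{jj}$ with $\eta=Hg-h$, and false for the ordinary $\sigma_k$ operator. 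With $|h_{ii}|\le\delta h_{11}$ in hand, the identities $F^{ii}-F^{11}=G^{11}-G^{ii}$ and $h_{11}-h_{ii}=\eta_{ii}-\eta_{11}$ let one bound $\sum_{i\ge2}F^{ii}h_{11i}^{2}/h_{11}^{2}$ by $-\frac{1+\delta}{h_{11}}\sum_{i\ge2}G^{1i,i1}h_{11i}^{2}$ plus terms controlled by the good quadratic term, so the concavity term absorbs all $i\ge2$ contributions, while the $i=1$ term is handled through \eqref{hat Q i} and $u_1=h_{11}\langle X,e_1\rangle$; the conclusion then still needs $F^{11}\ge 1/C$, obtained from $\sum_iG^{ii}\le C$ and \cite[Lemma 2.2]{HMW10}. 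Note also that your heuristic that the G{\aa}rding-cone conditions force some other principal curvature to be comparable to $\kappa_1$ is the opposite of what the partial curvature estimate asserts at the maximum point. Without this lemma (or a concrete substitute), the hardest step of the proof is missing.
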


If we replace $\lambda(\eta)$ by $\kappa(X)$ in \eqref{cj-1}, equation \eqref{cj-1} becomes the classical prescribed curvature equation
\begin{equation}
\label{cj-4}
\sigma_k (\kappa (X)) = f (X, \nu (X)), \quad \text{for $X \in M$}.
\end{equation}

When $k = 1,2$ and $n$, \eqref{cj-4} are prescribed mean curvature, scalar curvature and Gauss curvature equation respectively.
Establishing the global $C^2$ estimate for \eqref{cj-4} is a longstanding problem. When $k = 1$, it is quasi-linear and
the $C^2$ estimate follows from classical theory of quasi-linear PDEs. If $k=n$, \eqref{cj-4} is Monge-Amp\`{e}re type
and the $C^2$ estimates for general $f (X, \nu)$ are established by Caffarelli-Nirenberg-Spruck \cite{CNSI}. When $k = 2$, the $C^2$ estimate for \eqref{cj-4} was obtained by Guan-Ren-Wang \cite{GRW15} and their proof was simplified by Spruck-Xiao \cite{SX17}. In \cite{RW19,RW20}, Ren-Wang proved the $C^2$ estimate when $k=n-1$ and $n-2$.

When $2<k<n$, Caffarelli-Nirenberg-Spruck \cite{CNSIV} proved the $C^{2}$ estimate if $f$ is independent of $\nu$. Guan-Guan \cite{GG02} obtained the $C^{2}$ estimate if $f$ depends only on $\nu$. Ivochkina \cite{Ivochkina90,Ivochkina91} considered the corresponding Dirichlet problem of (\ref{cj-4}) on Euclidean domain and established the $C^{2}$ estimate under some extra assumptions on the dependence of $f$ on $\nu$. For equations of the prescribing curvature measure problem, when $f(X,\nu)=\langle X,\nu\rangle\ti{f}(X)$, the $C^{2}$ estimate was proved by Guan-Lin-Ma \cite{GLM09} and Guan-Li-Li \cite{GLL12}. For general $f(X,\nu)$, Guan-Ren-Wang \cite{GRW15} established such estimates for $(k+1)$-convex hypersurfaces (i.e., $\kappa(X) \in \Gamma_{k+1}$ for all $X\in M$).

When $k=n=2$, \eqref{cj-1} is the same as \eqref{cj-4}, which is the prescribed Gauss curvature equation. Thus \eqref{cj-1} can be regarded as a generalization of the classical prescribed curvature equation. When $k=n$, \eqref{cj-1} becomes the following equation for $(\eta,n)$-convex hypersurface:
\begin{equation}
\label{cj-5}
\det (\eta (X)) = f (X, \nu (X)), \quad \text{for $X \in M$}.
\end{equation}
The $(\eta,n)$-convex hypersurface has been studied intensively by Sha \cite{Sha86,Sha87}, Wu \cite{Wu87}  and Harvey-Lawson \cite{HL13}. We note that $(\eta, n)$-convexity was called $(n-1)$-convexity in \cite{Sha86,Sha87,HL13} (Here $(n-1)$-convexity is different from the above) or $(n-1)$-positivity in \cite{Wu87}. On the other hand, the left hand side of \eqref{cj-5} is a combination of Weingarten curvatures, which is a natural curvature function of $(\eta,n)$-convex hypersurfaces. So it is interesting to consider the curvature equation \eqref{cj-5} and its generalization \eqref{cj-1}.

In the complex setting, the corresponding Hessian type equation of (\ref{cj-1}) has been studied extensively. In particular, when $k=n$, it is called $(n-1)$ Monge-Amp\`ere equation, which is related to the Gauduchon conjecture (see \cite[\S IV.5]{Gauduchon84}) in complex geometry. The Gauduchon conjecture was solved by Sz\'ekelyhidi-Tosatti-Weinkove \cite{STW17}. For more references, we refer the reader to \cite{FWW10,FWW15,Popovici15,Szekelyhidi18,TW17,TW19} and references therein.

Compared to the work of Guan-Ren-Wang \cite{GRW15}, the curvature estimate in Theorem \ref{Curvature estimate} can be established without the assumption of ``strong" convexity of solution. Precisely, we prove the desired estimate for $(\eta,k)$-convex hypersurface. Clearly, the $(\eta,k)$-convexity is the natural elliptic condition for equation \eqref{cj-1}.

To obtain the existence of $(\eta, k)$-convex hypersurface satisfying the prescribed curvature equation \eqref{cj-1}, we need two additional conditions on $f$ as in  \cite{BK74,TrWe83,CNSIV}. The first condition is that there exist two positive constants $r_1 < 1 < r_2$ such that
\begin{equation}
\label{cj-6}
\begin{aligned}
f \left(X, \frac{X}{|X|}\right) \geq \,& \frac{C^k_n (n-1)^k}{r_1^k}, \ \ \mbox{ for } |X| = r_1;\\
f \left(X, \frac{X}{|X|}\right) \leq \,& \frac{C^k_n (n-1)^k}{r_2^k}, \ \ \mbox{ for } |X| = r_2,
\end{aligned}
\end{equation}
where $C_{n}^{k}=\frac{n!}{k!(n-k)!}$. The second one is that for any fixed unit vector $\nu$,
\begin{equation}
\label{cj-7}
\frac{\partial}{\partial \rho}\left(\rho^k f (X, \nu)\right) \leq 0,
\end{equation}
where $\rho=|X|$.
\begin{theorem}
\label{cj-thm-2}
Let $f \in C^2 \left((\overline{B}_{r_2} \setminus B_{r_1}) \times \mathbb{S}^n\right)$ be a positive function satisfying conditions \eqref{cj-6} and \eqref{cj-7}. Then equation \eqref{cj-1} has a unique $C^{3, \alpha}$ star-shaped $(\eta, k)$-convex solution $M$ in $\{r_1 \leq |X| \leq r_2\}$ for any $\alpha\in(0,1)$.
\end{theorem}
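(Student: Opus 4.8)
\emph{Plan.} I would solve \eqref{cj-1} by a degree-theoretic continuation argument in which Theorem~\ref{Curvature estimate} supplies the decisive a priori second order bound. Since every competitor is star-shaped, I would write $M=\{\rho(z)\,z:z\in\mathbb{S}^{n}\}$ for a positive function $\rho$ on $\mathbb{S}^{n}$ and express the fundamental forms $g_{ij},h_{ij}$, hence $\eta_{ij}=Hg_{ij}-h_{ij}$ and the outer normal $\nu$, as the usual expressions in $\rho,\nabla\rho,\nabla^{2}\rho$ with respect to the round metric, with $h_{ij}$ affine in $\nabla^{2}\rho$; then \eqref{cj-1} reads
\[
\mathcal{F}(\rho):=\sigma_{k}\bigl(\lambda(\eta[\rho])\bigr)-f\bigl(\rho z,\nu[\rho]\bigr)=0 \qquad\text{on }\mathbb{S}^{n}.
\]
Because $\sigma_{k}^{1/k}$ is concave on $\Gamma_{k}$ and $\nabla^{2}\rho\mapsto\eta[\rho]$ is affine, $\sigma_{k}(\lambda(\eta[\rho]))^{1/k}$ is concave in $\nabla^{2}\rho$; and $(\eta,k)$-convexity is precisely the ellipticity of $\mathcal{F}$ at $\rho$, since the coefficient matrix of the linearization of $\sigma_{k}(\lambda(\eta))$ in $h$ is $(n-k+1)\sigma_{k-1}(\eta)g^{ij}-\sigma_{k}^{ij}(\eta)$, which is positive definite exactly when $\lambda(\eta)\in\Gamma_{k}$. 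Thus $\mathcal{F}$ is a concave, elliptic fully nonlinear operator on the cone of $(\eta,k)$-convex radial graphs.

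\emph{Uniform a priori estimates along a homotopy.} I would fix a radial model $f_{0}(X)=\phi(|X|)$ with $\rho^{k}\phi(\rho)$ strictly decreasing and $r_{1}^{k}\phi(r_{1})>C^{k}_{n}(n-1)^{k}>r_{2}^{k}\phi(r_{2})$ and set $f_{t}=(1-t)f_{0}+tf$, $t\in[0,1]$; one checks directly that every $f_{t}$ satisfies \eqref{cj-6} and \eqref{cj-7} and that $\inf_{t}\inf f_{t}>0$. For any star-shaped $(\eta,k)$-convex solution of $\sigma_{k}(\lambda(\eta))=f_{t}$ the estimates run as follows. (i) $C^{0}$: $r_{1}\le\rho\le r_{2}$, by the comparison-sphere maximum principle of \cite{BK74,TrWe83,CNSIV} — at an interior maximum of $\rho$ the sphere of radius $\rho_{\max}$ touches $M$ from outside, so $\kappa_{i}\ge 1/\rho_{\max}$, hence $\sigma_{k}(\lambda(\eta))\ge C^{k}_{n}((n-1)/\rho_{\max})^{k}$, and \eqref{cj-7} together with \eqref{cj-6} rules out $\rho_{\max}>r_{2}$; the lower bound is symmetric. (ii) $C^{1}$: $|\nabla\rho|\le C$, equivalently a positive lower bound for the support function $\langle X,\nu\rangle$, by the maximum principle applied to a suitable combination of $\langle X,\nu\rangle$ and $|X|^{2}$, as in \cite{CNSIV,GG02}, using (i) and \eqref{cj-7}. (iii) $C^{2}$: with $\|M\|_{C^{1}}\le C$ available, Theorem~\ref{Curvature estimate} gives $\max_{i}|\kappa_{i}|\le C$; since in addition $\sigma_{k}(\lambda(\eta))=f_{t}\ge c>0$ with $\lambda(\eta)\in\Gamma_{k}$, Maclaurin's inequality yields $\sigma_{j}(\lambda(\eta))\ge c_{j}>0$ for $j<k$, so $\lambda(\eta)$ remains in a fixed compact $K\subset\Gamma_{k}$ and $\mathcal{F}$ is uniformly elliptic. (iv) Higher order: Evans--Krylov upgrades (iii) to a $C^{2,\alpha}$ bound, and differentiating the equation and applying Schauder theory (using $f\in C^{2}$) gives $\|\rho\|_{C^{3,\alpha}}\le C$; all constants are independent of $t$.

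\emph{Existence and uniqueness.} Let $\mathcal{O}\subset C^{3,\alpha}(\mathbb{S}^{n})$ be the open set of $\rho$ with $r_{1}/2<\rho<2r_{2}$, $\|\rho\|_{C^{3,\alpha}}<C+1$, and $\lambda(\eta[\rho])$ in a fixed neighborhood of $K$ in $\Gamma_{k}$. By the uniform estimates the map $\mathcal{F}_{t}:=\sigma_{k}(\lambda(\eta[\cdot]))-f_{t}$ has no zero on $\partial\mathcal{O}$ for any $t$, so the Leray--Schauder degree $\deg(\mathcal{F}_{t},\mathcal{O},0)$ for fully nonlinear elliptic operators (as used in \cite{CNSIV}) is defined and independent of $t$. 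At $t=0$ the only zero in $\mathcal{O}$ is the round sphere $S_{r^{*}}$ with $(r^{*})^{k}\phi(r^{*})=C^{k}_{n}(n-1)^{k}$; the linearization of $\mathcal{F}_{0}$ there is a rotationally invariant operator $a\Delta_{\mathbb{S}^{n}}+b$ with $a\ne0$, and it is invertible — the constant mode is excluded by the strict monotonicity of $\rho^{k}\phi$, the linear modes because $f_{0}$ is not translation invariant, and the higher modes by the choice of $\phi$ — whence $\deg(\mathcal{F}_{0},\mathcal{O},0)=\pm1\ne0$. Therefore $\deg(\mathcal{F}_{1},\mathcal{O},0)\ne0$ and \eqref{cj-1} admits a solution $M\in C^{3,\alpha}$, star-shaped, $(\eta,k)$-convex, contained in $\{r_{1}\le|X|\le r_{2}\}$. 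For uniqueness, given two such solutions $M_{1},M_{2}$ I would compare the two equations at a point where $\rho_{1}/\rho_{2}$ is maximal, and then where it is minimal, and invoke \eqref{cj-7} to conclude $M_{1}\equiv M_{2}$.

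The step I expect to be the genuine obstacle is (iii), the interior curvature bound, which is exactly Theorem~\ref{Curvature estimate}; once it is granted, the remaining delicate points are the gradient estimate (ii) for $f$ genuinely depending on $\nu$, where \eqref{cj-7} is used in an essential way, and verifying the non-degeneracy of the model solution $S_{r^{*}}$ so that the degree is nonzero.
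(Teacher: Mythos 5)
Your a priori estimate scheme is exactly the paper's: the $C^{0}$ bound obtained at the maximum and minimum of $\rho$ from \eqref{cj-6}--\eqref{cj-7}, the gradient bound via a positive lower bound on the support function $u=\langle X,\nu\rangle$ (this is precisely the paper's Lemma \ref{cj-lem2}), the curvature bound from Theorem \ref{Curvature estimate}, and Evans--Krylov plus Schauder for $C^{3,\alpha}$. Where you diverge is the final topological step. The paper runs the continuity method of \cite{CNSIV} with the explicit homotopy $f^{t}=tf+(1-t)C_{n}^{k}(n-1)^{k}\bigl[|X|^{-k}+\ve(|X|^{-k}-1)\bigr]$, chosen so that the unit sphere is the known solution at $t=0$ and so that \eqref{cj-6}--\eqref{cj-7} hold with \emph{strict} inequalities for $0\le t<1$; the strict form of \eqref{cj-7} makes the zeroth-order coefficient of the linearized operator strictly negative, so openness (and uniqueness for $t<1$) follows from the maximum principle, and the $t=1$ solution is obtained as a limit using the uniform estimates. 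You instead set up a Leray--Schauder degree argument; this buys a cleaner logical structure at $t=1$, but costs two verifications that you assert rather than prove: that the radial model problem has no zero in $\mathcal{O}$ other than the round sphere $S_{r^{*}}$ (true, but it needs the ratio-comparison uniqueness under the \emph{strict} monotonicity of $\rho^{k}\phi$, plus rotation invariance of $f_{0}$ to force roundness), and that the linearization at $S_{r^{*}}$ is invertible --- your remark on the first harmonics is the right mechanism (the curvature part vanishes there and one is left with $-\phi'(r^{*})\neq0$), but ``the higher modes by the choice of $\phi$'' is not an argument; the cheapest fix is to invoke the same maximum-principle invertibility that strict \eqref{cj-7} for $f_{0}$ already gives. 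Finally, note that your uniqueness sketch via the maximum and minimum of $\rho_{1}/\rho_{2}$ uses only the non-strict \eqref{cj-7}: the dilated surface is merely a supersolution, so one needs the strong maximum principle and an additional argument in the borderline equality case (for $f=C_{n}^{k}(n-1)^{k}|X|^{-k}$, which satisfies \eqref{cj-6}--\eqref{cj-7} with equalities, every centered sphere of radius in $[r_{1},r_{2}]$ solves the equation); the paper is equally brief on this point and simply defers to the argument of \cite{CNSIV}.
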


We now discuss the proof of Theorem \ref{Curvature estimate}. To prove the curvature estimate, we apply the maximum principle to a quantity involving the logarithm of the largest principal curvature. Since the right hand side $f$ depends on $\nu$, there are more troublesome terms when we differentiate the equation \eqref{cj-1}. We overcome this difficulty by using some properties of the operator $\sigma_{k}$. The second difficulty is how to deal with bad third order terms. Our approach is to establish the partial curvature estimate which is very useful to analyze the concavity of the operator $\sigma_{k}^{1/k}$. This gives us more good third order terms, which is enough to control the bad third order terms.

Next, we give two applications of the above idea. The first application is the $C^{2}$ estimate for the corresponding Hessian type equation in Euclidean domains.
Let $\Omega$ be a bounded domain in $\mathbb{R}^{n}$. For $\vp\in C^{2}(\Omega)$, we define
\[
\eta_{ij} = (\Delta \vp)\delta_{ij}-\vp_{ij}.
\]
The function $\vp$ is called $(\eta,k)$-convex if the eigenvalues $\lambda(\eta)$ of $\eta_{ij}$ is in $\Gamma_{k}$ for all $x\in\Omega$. We consider the following equation
\begin{equation}\label{Hessian type equation}
\sigma_{k}(\lambda(\eta)) =f(x,\vp,\nabla\vp), \quad \text{in $\Omega$},
\end{equation}
where $f$ is a positive function defined on $\ov{\Omega}\times\mathbb{R}\times\mathbb{R}^{n}$.

For equation \eqref{Hessian type equation}, when $f$ is independent of $\nabla\vp$, the $C^{2}$ estimate was proved by Caffarelli-Nirenberg-Spruck \cite{CNSIII}, where they treated a general class of fully nonlinear equations. When $f$ depends on $\nabla\vp$, equation \eqref{Hessian type equation} falls into the setup of \cite{Guan99} (see also \cite{GJ15}), and $C^{2}$ estimate was obtained under the convexity assumption of $f$ on $\nabla\vp$. In the following theorem, we remove this assumption.

\begin{theorem}\label{Global estimate}
Let $\vp\in C^{4}(\Omega)$ be a $(\eta,k)$-convex solution of \eqref{Hessian type equation}. Then there exists a constant $C$ depending only on $n$, $k$, $\|\vp\|_{C^{1}}$, $\inf f$, $\|f\|_{C^{2}}$ and $\Omega$ such that
\[
\sup_{\Omega}|\nabla^{2}\vp| \leq C \big(1 + \sup_{\de\Omega}|\nabla^{2}\vp|\big).
\]
\end{theorem}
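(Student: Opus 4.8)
The plan is to mimic the interior-curvature argument used for the hypersurface case in Theorem \ref{Curvature estimate}, but now in the setting of a graph over $\Omega$, reducing the second-derivative bound on $\vp$ to the partial curvature estimate plus the maximum principle. First I would consider the test function
\[
W = \log \lambda_{1}(D^{2}\vp) + \phi(|\nabla\vp|^{2}) + \psi(\vp),
\]
where $\lambda_{1}(D^{2}\vp)$ is the largest eigenvalue of the real Hessian, and $\phi,\psi$ are auxiliary one-variable functions to be chosen (with $\phi$ increasing and $\psi$ a large multiple of $\vp$, or a quadratic in $\vp$, tuned so that their first-order contributions absorb the bad gradient terms). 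The key observation is that the operator $F(D^{2}\vp) := \sigma_{k}(\lambda(\eta))^{1/k}$, where $\eta_{ij} = (\Delta\vp)\delta_{ij} - \vp_{ij}$, is concave in $D^{2}\vp$ and satisfies exactly the structural properties of $\sigma_{k}^{1/k}$ (ellipticity on the $(\eta,k)$-convex cone, homogeneity of degree one, $F^{ij}\vp_{ij}$ controlled, $\sum F^{ii}$ bounded below) that are used in the closed hypersurface argument; the linear change of variables $h_{ij}\mapsto \eta_{ij}$ is invertible and triangular enough that all of these carry over.

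Next I would compute $L W$ at an interior maximum point $x_{0}$ of $W$ (away from $\partial\Omega$), where $L = F^{ij}\partial_{i}\partial_{j}$ is the linearized operator, treating $\lambda_{1}$ as a smooth function by perturbing to make the top eigenvalue simple. Differentiating \eqref{Hessian type equation} once and twice produces the first- and second-order derivatives of $f(x,\vp,\nabla\vp)$; the troublesome terms are those coming from the $\nabla\vp$ dependence, which on a graph manifest as third-order terms of $\vp$ paired with $D_{p}f$. These are handled exactly as sketched after Theorem \ref{Curvature estimate}: one first establishes a partial curvature estimate — a bound showing that once $\lambda_{1}$ is large, a controlled number of the remaining eigenvalues $\lambda_{2},\dots$ are comparable to $\lambda_{1}$ or else bounded — and then uses this to extract enough good (negative) third-order terms from the concavity inequality for $F = \sigma_{k}^{1/k}$ (the Guan–Ren–Wang / Spruck–Xiao type concavity inequality, which gives an extra good term of the form $\sum_{p>1}\frac{F^{11}-F^{pp}}{\lambda_{1}-\lambda_{p}}|\vp_{11p}|^{2}$ or similar) to dominate the bad third-order terms. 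The remaining zeroth- and first-order terms are absorbed by choosing $\phi$ and $\psi$ appropriately, using that $\sum F^{ii} \geq c_{0} > 0$ and $F$ is bounded below by $\inf f^{1/k}$.

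The conclusion is that at $x_{0}$ one obtains $\lambda_{1}(x_{0}) \leq C$ for a constant depending only on $n,k,\|\vp\|_{C^{1}},\inf f,\|f\|_{C^{2}}$ and $\Omega$; since $W(x_{0})$ bounds $W$ everywhere and the boundary contributes only $\sup_{\partial\Omega}|\nabla^{2}\vp|$, this yields $\sup_{\Omega}|D^{2}\vp| \leq C(1 + \sup_{\partial\Omega}|\nabla^{2}\vp|)$ after noting that the largest eigenvalue of $D^{2}\vp$ controls $|D^{2}\vp|$ up to a factor depending on $n$ and a lower bound for the eigenvalues (which follows from $(\eta,k)$-convexity, since $\lambda(\eta)\in\Gamma_{k}$ forces the $\eta$-eigenvalues, hence the Hessian eigenvalues, to be bounded below once the top one is bounded above). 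I expect the main obstacle to be the third-order term bookkeeping: verifying that the partial curvature estimate — which itself requires a separate maximum-principle argument on a related test function — produces precisely the quantity of good third-order terms needed, and that the concavity inequality for $\sigma_{k}^{1/k}$ composed with the linear map $D^{2}\vp \mapsto \eta$ retains the sign and magnitude one needs; the gradient-dependence of $f$ is what makes this delicate, exactly as in the proof of Theorem \ref{Curvature estimate}, and the Euclidean-domain case should follow from a direct transcription of that argument with the normal-bundle geometry replaced by the flat graph geometry.
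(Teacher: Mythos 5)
Your plan is the route the paper intends: the authors omit the proof of Theorem \ref{Global estimate} precisely because it is a transcription of the argument for Theorem \ref{Curvature estimate} to the flat setting (compare the sketch of Theorem \ref{Interior estimate}, whose test function is $\log\vp_{11}+\frac{a}{2}|\nabla\vp|^{2}+\frac{A}{2}|x|^{2}+\beta\log(-\vp)$; for the global estimate one simply drops the $\beta\log(-\vp)$ term). There is, however, one concrete gap in your version of the test function $W=\log\lambda_{1}(D^{2}\vp)+\phi(|\nabla\vp|^{2})+\psi(\vp)$: neither $\phi$ nor $\psi$ can reproduce the good term $A\sum_{i}F^{ii}$ which in the paper comes from the $\frac{A}{2}|x|^{2}$ summand (the analogue of \eqref{Calculation eqn 2}), and which is exactly what defeats the bad term $-C\vp_{11}$ in Case 1 of Lemma \ref{Bad term}, i.e.\ when $|\vp_{ii}|\leq\delta\vp_{11}$ for all $i\geq2$. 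Indeed $F^{ii}\vp_{ii}=G^{ii}\eta_{ii}=f^{1/k}$ is bounded, so $\psi'(\vp)F^{ii}\vp_{ii}$ is useless, while $\psi''F^{ii}\vp_{i}^{2}$ (and likewise the $\phi''$ contribution) degenerates wherever $\nabla\vp$ is small; and the quadratic good term $F^{ii}\vp_{ii}^{2}$ produced by $\phi(|\nabla\vp|^{2})$ does not suffice in this case: for $k=n$ one has $F^{11}\approx\frac{n-1}{n}f^{\frac1n-1}\eta_{11}\vp_{11}^{n-2}$ with $\eta_{11}\approx f\,\vp_{11}^{1-n}$, so $F^{11}\vp_{11}^{2}\approx\frac{n-1}{n}f^{1/n}\vp_{11}$, comparable to the bad term only with a fixed (possibly too small) constant, whereas $A\sum_{i}F^{ii}\geq \frac{A}{C}\sigma_{k-1}(\eta)\geq\frac{A}{C}\vp_{11}$ wins once $A$ is large. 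The fix is trivial --- put $\frac{A}{2}|x|^{2}$ into $W$ (admissible since $\Omega$ is bounded; it contributes only $Ax_{i}$ to the critical equation and $-CA$ elsewhere) --- but without some substitute for it, Case 1 does not close.

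Two smaller points. The partial curvature estimate is not obtained from ``a separate maximum-principle argument on a related test function'': in the paper (Lemma \ref{Partial curvature estimate}) it is extracted at the \emph{same} maximum point from the \emph{same} inequality \eqref{Inequality}, by inserting the critical equation (here $\vp_{11i}/\vp_{11}=-a\vp_{i}\vp_{ii}-Ax_{i}$, cf.\ \eqref{Interior estimate eqn 1}) together with Cauchy--Schwarz and $F^{ii}\geq\frac{1}{n(n-1)}\sum_{k}F^{kk}$ for $i\geq2$; this matters because the bound is needed exactly at the maximum point of $W$, and it yields the uniform bound $|\vp_{ii}|\leq CA$ for \emph{all} $i\geq2$, not merely for ``a controlled number'' of eigenvalues --- this full strength is what allows $\frac{1}{\vp_{11}}\leq\frac{1+\delta}{\vp_{11}-\vp_{ii}}$ and hence the conversion of $\sum_{i\geq2}(F^{ii}-F^{11})\vp_{11i}^{2}/\vp_{11}^{2}$ into the concavity term via $-G^{1i,i1}=\frac{G^{11}-G^{ii}}{\eta_{ii}-\eta_{11}}$. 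Your final reduction is fine: $\Gamma_{k}\subset\Gamma_{1}$ gives $\Delta\vp>0$, hence $\lambda_{\min}(D^{2}\vp)\geq-(n-1)\lambda_{\max}(D^{2}\vp)$, so bounding the largest eigenvalue bounds $|\nabla^{2}\vp|$, and if the maximum of $W$ lies on $\de\Omega$ the boundary term $\sup_{\de\Omega}|\nabla^{2}\vp|$ appears directly.
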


In this paper, we omit the proof of Theorem \ref{Global estimate} since it is almost identical to that of Theorem \ref{Curvature estimate}.

The second application is an interior $C^{2}$ estimate for the following Dirichlet problem
\begin{equation}\label{Dirichlet Problem}
\begin{cases}
\sigma_{k}(\lambda(\eta)) =f(x,\vp,\nabla\vp) & \mbox {in $\Omega$}, \\
\vp = 0 & \mbox{on $\de\Omega$}.
\end{cases}
\end{equation}

\begin{theorem}\label{Interior estimate}
For the Dirichlet problem (\ref{Dirichlet Problem}), there exists a constant $C$ and $\beta$ depending only on $n$, $k$, $\|\vp\|_{C^{1}}$, $\inf f$, $\|f\|_{C^{2}}$ and $\Omega$ such that
\[
\sup_{\Omega}\left[(-\vp)^{\beta}\Delta\vp\right] \leq C.
\]
\end{theorem}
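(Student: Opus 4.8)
The plan is to derive the interior estimate from the global estimate of Theorem~\ref{Global estimate} by the standard device of applying the maximum principle to an auxiliary quantity that vanishes on $\de\Omega$. Concretely, I would consider the test function
\[
W = (-\vp)^{\beta}\,\eta_{\xi\xi} + A|\nabla\vp|^{2},
\]
where $\eta_{\xi\xi}$ is the largest eigenvalue of $\eta$ in some direction $\xi$, $\beta>0$ is to be chosen small, and $A$ is a large constant chosen to absorb gradient terms (recall $\|\vp\|_{C^1}$ is already controlled, so $|\nabla\vp|$ is bounded). Since $\vp=0$ on $\de\Omega$, the factor $(-\vp)^{\beta}$ forces $W$ to attain its maximum at an interior point $x_0$, and at that point one runs the same curvature-type computation as in Theorem~\ref{Curvature estimate}/Theorem~\ref{Global estimate}, now with the extra terms coming from differentiating the weight $(-\vp)^{\beta}$.

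The key steps, in order, are as follows. First I would fix coordinates at $x_0$ diagonalizing $\eta$ (equivalently $\nabla^2\vp$) and reduce, as in the proof of Theorem~\ref{Global estimate}, to bounding the largest eigenvalue $\lambda_1(\eta)(x_0)$; the reverse direction (bounding $|\nabla^2\vp|$ by $\lambda(\eta)$) uses only that $\lambda(\eta)\in\Gamma_k$ together with $\|\vp\|_{C^1}$. Second, I would compute $L W$ at $x_0$, where $L = \sum_{i,j} F^{ij}\de_i\de_j$ is the linearized operator of $\sigma_k(\lambda(\eta))$ (which, importantly, is \emph{not} the same as the linearization of an ordinary Hessian equation because of the trace term in $\eta_{ij}=(\Delta\vp)\delta_{ij}-\vp_{ij}$, so $F^{ij}$ is itself a combination of $\de\sigma_k/\de\eta$ and its trace). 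Third, the new terms relative to the global estimate come from $L\big((-\vp)^{\beta}\big)$ and from the cross term $2\beta(-\vp)^{\beta-1}F^{ij}\de_i(-\vp)\de_j\eta_{\xi\xi}$; the former is estimated using the equation ($\sum_i F^{ii}\vp_{ii}$ and $\sum_i F^{ii}$ are both controlled from $\inf f$ and $\|f\|_{C^2}$, and $F^{ii}>0$), and the latter is handled by the usual trick of using the critical equation $\de_i W=0$ at $x_0$ to replace $\de_i\eta_{\xi\xi}$ by $\beta(-\vp)^{-1}\de_i(-\vp)\,\eta_{\xi\xi} + \frac{2A}{(-\vp)^{\beta}}\vp_j\vp_{ij}$ and then absorbing the resulting $\eta_{\xi\xi}$-linear terms into the good term $\sum_i F^{ii}(\de_i\eta_{\xi\xi})^2/\eta_{\xi\xi}$ coming from the partial curvature estimate. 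Finally, the same concavity-of-$\sigma_k^{1/k}$ argument together with the partial curvature estimate established in the proof of Theorem~\ref{Curvature estimate} supplies enough good third-order terms to dominate the bad third-order terms, and one concludes $(-\vp)^{\beta}\eta_{\xi\xi}(x_0)\le C$, hence $W\le C$ everywhere, which gives the claimed bound on $(-\vp)^{\beta}\Delta\vp$ after using $\lambda(\eta)\in\Gamma_k$ once more.

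The main obstacle, as in the global estimate, is the dependence of $f$ on $\nabla\vp$: differentiating \eqref{Dirichlet Problem} twice produces terms of the form $f_{p_\ell}\de_i\vp_{\ell}$ and $f_{p_\ell p_m}\vp_{i\ell}\vp_{im}$ that are quadratic in the second derivatives and, a priori, of the wrong sign; these cannot be controlled by naive Cauchy--Schwarz and must instead be handled through the algebraic identities for $\sigma_k$ used in Theorem~\ref{Curvature estimate}, exploiting that such terms appear contracted against $F^{ij}$. The secondary technical point is that near $\de\Omega$ the weight $(-\vp)$ degenerates, so one must track powers of $(-\vp)$ carefully and choose $\beta$ small enough (depending on $n$, $k$ and the structure constants) that every error term carries a nonnegative power of $(-\vp)$; this is exactly where the constant $\beta$ in the statement comes from. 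Once these are in place the argument is a routine adaptation of the proof of Theorem~\ref{Global estimate}, so I would present it by indicating the modifications rather than repeating the full computation.
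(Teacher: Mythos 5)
Your overall strategy is the same as the paper's: a Pogorelov-type test function (weight $(-\vp)^{\beta}$ times the largest second derivative, plus lower-order terms), maximum principle at an interior point, and then a rerun of the Theorem \ref{Curvature estimate} machinery (concavity of $\sigma_k^{1/k}$ plus the partial second-derivative estimate) to kill the third-order terms. However, there is a genuine gap in how you propose to choose $\beta$, and your stated reason for that choice is wrong. You take $\beta$ \emph{small}, ``so that every error term carries a nonnegative power of $(-\vp)$''; but the dangerous terms produced by differentiating the weight do not improve as $\beta\to 0$: in the paper's notation they are $-\beta F^{ii}\vp_i^2/\vp^2$ (and, in your additive ansatz, $\beta(\beta-1)(-\vp)^{\beta-2}\vp_i^2F^{ii}\eta_{\xi\xi}$), which carry the fixed weight $(-\vp)^{-2}$ no matter how small $\beta$ is. The way these are actually handled is through the critical-point identity \eqref{Interior estimate eqn 1}: squaring it converts the $\beta\vp_i/\vp$ terms into third-order terms with prefactor of order $1/\beta$, giving a total coefficient $\bigl(1+\tfrac{3}{\beta}\bigr)$ in front of $\sum_{i\ge2}F^{ii}\vp_{11i}^2/\vp_{11}^2$. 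The concavity term $-\tfrac{2}{\vp_{11}}\sum_{i\ge2}G^{1i,i1}\vp_{11i}^2$ together with the partial estimate only dominates this sum when its coefficient is close to $1$ (compare the factor $1+\delta$ versus $2$ in \eqref{Curvature estimate eqn 2}--\eqref{Curvature estimate eqn 3}), so one must take $\beta$ \emph{large}, not small; with $\beta$ small the absorption fails and your argument breaks exactly at the step you describe as ``routine''. Relatedly, you call $\sum_i F^{ii}(\de_i\eta_{\xi\xi})^2/\eta_{\xi\xi}$ a ``good term coming from the partial curvature estimate''; in the computation it enters with a \emph{negative} sign (it is the term $-F^{ii}h_{11i}^2/h_{11}^2$, resp. $-F^{ii}\vp_{11i}^2/\vp_{11}^2$) and is precisely what must be dominated, not a resource to absorb into.

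Two smaller points of tuning also go the opposite way from what the scheme requires. The coefficient of the gradient term must be \emph{small} (the paper's $\tfrac{a}{2}|\nabla\vp|^2$ with $a$ small), because the critical-point substitution produces $-Ca^2F^{11}\vp_{11}^2$-type terms that have to be beaten by $\tfrac{a}{4}F^{ii}\vp_{ii}^2$; taking a large constant $A$ there, as you propose, destroys this absorption. The paper instead puts the large constant on a separate term $\tfrac{A}{2}|x|^2$, whose role is to generate the good term $\tfrac{A}{2}\sum_iF^{ii}$. Finally, at the endgame one needs a quantitative lower bound such as $F^{11}\vp_{11}^2\ge \vp_{11}/C$ (via \cite[Lemma 2.2]{HMW10} and $|\vp_{ii}|\le\delta\vp_{11}$ for $i\ge2$) to convert the concluding differential inequality into $(-\vp)^{\beta}\vp_{11}\le C$; this step is absent from your outline and is not automatic from $\lambda(\eta)\in\Gamma_k$ alone.
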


For $k$-Hessian equation, when $f$ is independent of $\nabla\vp$, the interior $C^{2}$ estimate was established by Pogorelov \cite{Pogorelov78} for $k=n$ and Chou-Wang \cite{CW01} for general $k$. When $f$ depends on $\nabla\vp$, Li-Ren-Wang \cite{LRW16} proved such estimate for $(k+1)$-convex solution (if $k=2$, the $3$-convexity condition can be replaced by $2$-convexity condition).

\bigskip

{\bf Acknowledgement.}
We thank Professor Ben Weinkove for introducing the $(n-1)$ Monge-Amp\`ere equation and many helpful comments. The work was carried out while the second author was visiting the Department of Mathematics at Northwestern University. He wishes to thank the Department and University for their hospitality. He also would like to thank China Scholarship Council for their support. The second author is supported by the National Natural Science Foundation of China (Grant Nos. 11601105, 11871243 and 11671111).

\section{Preliminaries}
Let $\mathbb{S}^{n}$ be the unit sphere in $\mathbb{R}^{n+1}$. A hypersurface $M\subset\mathbb{R}^{n+1}$ is called star-shaped if it is a radial graph of $\mathbb{S}^{n}$ for some positive function $\rho$. Thus, for $x\in\mathbb{S}^{n}$, $X(x)=\rho(x)x$ is the position vector. We have the following expressions of $g_{ij}$, $h_{ij}$ and $\nu$ (see e.g. \cite[p.1952]{GLM09})
\begin{equation}\label{Expressions of g and h}
g_{ij} = \rho^{2}\hat{g}_{ij}+\rho_{i}\rho_{j}, \quad
h_{ij} = \frac{\rho^{2}\hat{g}_{ij}+2\rho_{i}\rho_{j}-\rho\rho_{ij}}{\sqrt{\rho^{2}+|\nabla\rho|^{2}}}
\end{equation}
and
\begin{equation}\label{Expression of nu}
\nu = \frac{\rho x-\nabla\rho}{\sqrt{\rho^{2}+|\nabla\rho|^{2}}},
\end{equation}
where $\hat{g}$ and $\nabla$ denote the standard metric and the gradient on $\mathbb{S}^{n}$ respectively.

On the other hand, for $X_{0}\in M$, let $\{e_{1},e_{2},\cdots,e_{n}\}$ be a local orthonormal frame near $X_{0}$. The following formulas are well-known:
\[
\begin{split}
\text{Guass formula}: \quad & X_{ij} = -h_{ij}\nu, \\
\text{Weingarten equation}: \quad & \nu_{i} = h_{ij}e_{j}, \\
\text{Codazzi formula}: \quad & h_{ijk} = h_{ikj}, \\
\text{Guass equation}: \quad & R_{ijkl} = h_{ik}h_{jl}-h_{il}h_{jk}
\end{split}
\]
and
\begin{equation}\label{Commutation formula}
h_{ijkl} = h_{klij}+(h_{mj}h_{il}-h_{ml}h_{ij})h_{mk}+(h_{mj}h_{kl}-h_{ml}h_{kj})h_{mi}.
\end{equation}
where $R_{ijkl}$ is the curvature tensor of $M$.

\section{Curvature estimate}\label{Section curvature estimate}
In this section, we give the proof of Theorem \ref{Curvature estimate}. When $k=1$, Theorem \ref{Curvature estimate} follows from classical theory of quasi-linear PDEs. So we assume that $k\geq2$ in the following sections.

To prove Theorem \ref{Curvature estimate}, we define a function $u=\langle X,\nu\rangle$. By (\ref{Expression of nu}), it is clear that
\begin{equation}\label{Expression of u}
u = \frac{\rho^{2}}{\sqrt{\rho^{2}+|\nabla\rho|^{2}}} .
\end{equation}
Then there exists a positive constant $C$ depending on $\inf_{M}\rho$ and $\|\rho\|_{C^1}$ such that
\[
\frac{1}{C} \leq \inf_{M}u \leq u \leq  \sup_{M}u \leq C.
\]
Let $\kappa_{\mathrm{max}}$ be the largest principal curvature. From $\eta\in\Gamma_{k}\subset\Gamma_{1}$, we see that the mean curvature is positive. It suffices to prove $\kappa_{\mathrm{max}}$ is uniformly bounded from above. Without loss of generality, we may assume that the set $D=\{\kappa_{\mathrm{max}}>0\}$ is not empty. On $D$, we consider the following function
\[
Q = \log\kappa_{\mathrm{max}}-\log(u-a)+\frac{A}{2}|X|^{2},
\]
where $a=\frac{1}{2}\inf_{M}u>0$ and $A>1$ is a constant to be determined later. Note that $Q$ is continuous on $D$, and goes to $-\infty$ on $\de D$. Hence $Q$ achieves a maximum at a point $X_{0}$ with $\kappa_{\mathrm{max}}(X_{0})>0$. We choose a local orthonormal frame $\{e_{1},e_{2},\cdots,e_{n}\}$ near $X_{0}$ such that
\[
h_{ij} = \delta_{ij}h_{ii} \ \ \text{and} \ \
h_{11} \geq h_{22} \geq \cdots \geq h_{nn} \ \   \text{at $X_{0}$}.
\]
Recalling that $\eta_{ii}=\sum_{k\neq i}h_{kk}$, we have
\[
\eta_{11} \leq \eta_{22} \leq \cdots \leq \eta_{nn}.
\]
Near $X_{0}$, we define a new function $\hat{Q}$ by
\[
\hat{Q} = \log h_{11}-\log(u-a)+\frac{A}{2}|X|^{2}.
\]
Since $h_{11}(X_{0})=\kappa_{\mathrm{max}}(X_{0})$ and $h_{11}\leq\kappa_{\mathrm{max}}$ near $X_{0}$, $\hat{Q}$ achieves a maximum at $X_{0}$. From now on, all the calculations will be carried out at $X_{0}$. For convenience, we introduce the following notations:
\[
G(\eta) = \sigma_{k}^{\frac{1}{k}}(\eta), \quad
G^{ij} = \frac{\de G}{\de\eta_{ij}}, \quad
G^{ij,kl} = \frac{\de^{2} G}{\de\eta_{ij}\de\eta_{kl}}, \quad
F^{ii} = \sum_{k\neq i}G^{kk}.
\]
Thus,
\[
G^{ii} = \frac{1}{k}[\sigma_{k}(\eta)]^{\frac{1}{k}-1}\sigma_{k-1}(\eta|i),
\]
where $\sigma_{k-1}(\eta|i)$ denotes $(k-1)$-th elementary symmetric function with $\eta_{ii}=0$. It then follows that
\[
G^{11} \geq G^{22} \geq \cdots \geq G^{nn}, \quad F^{11} \leq F^{22}  \leq \cdots \leq F^{nn}.
\]
Applying the maximum principle, for any $1\leq i\leq n$, we have
\begin{equation}\label{hat Q i}
0 = \hat{Q}_{i}
= \frac{h_{11i}}{h_{11}}-\frac{u_{i}}{u-a}+A\langle X,e_{i}\rangle
\end{equation}
and
\begin{equation}\label{F ii hat Q ii}
0 \geq F^{ii}\hat{Q}_{ii}
= F^{ii}(\log h_{11})_{ii}-F^{ii}(\log(u-a))_{ii}+\frac{A}{2}F^{ii}(|X|^{2})_{ii}.
\end{equation}

We first need to estimate each term in (\ref{F ii hat Q ii}).

\begin{lemma}\label{Calculation}
We have
\[
\begin{split}
0 \geq {} & -\frac{2}{h_{11}}\sum_{i\geq2}G^{1i,i1}h_{11i}^{2}-\frac{F^{ii}h_{11i}^{2}}{h_{11}^{2}} \\
& +\frac{aF^{ii}h_{ii}^{2}}{u-a}+\frac{F^{ii}u_{i}^{2}}{(u-a)^{2}}
-Ch_{11}+A\sum_{i}F^{ii}-CA.
\end{split}
\]
\end{lemma}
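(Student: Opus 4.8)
The plan is to compute each of the three terms in \eqref{F ii hat Q ii} separately at $X_0$ and combine them. First I would handle the Hessian-of-the-position-vector term: since $X_{ij} = -h_{ij}\nu$ by the Gauss formula and the frame is chosen so that $h_{ij} = \delta_{ij}h_{ii}$, a direct computation of $(|X|^2)_{ii} = 2\langle X_i, X_i\rangle + 2\langle X, X_{ii}\rangle = 2 - 2h_{ii}\langle X,\nu\rangle = 2 - 2uh_{ii}$ gives $\tfrac{A}{2}F^{ii}(|X|^2)_{ii} = A\sum_i F^{ii} - Au\sum_i F^{ii}h_{ii}$. The last piece is bounded: $\sum_i F^{ii}h_{ii} = \sum_i \sum_{k\neq i} G^{kk} h_{ii}$, and using the identity $\sum_k G^{kk}\eta_{kk} = \sigma_k^{1/k}(\eta) = G$ together with $\eta_{kk} = H - h_{kk}$ one relates this to $G$ and $\sum_k G^{kk}$, which are controlled by $n$, $k$ and the equation; so $-Au\sum_i F^{ii}h_{ii} \geq -CA$ after absorbing, contributing the $-CA$ term (and the $A\sum_i F^{ii}$ term survives intact).

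Next I would expand the $-F^{ii}(\log(u-a))_{ii}$ term. Writing $(\log(u-a))_{ii} = \tfrac{u_{ii}}{u-a} - \tfrac{u_i^2}{(u-a)^2}$, the $\tfrac{u_i^2}{(u-a)^2}$ part immediately produces the good term $+\tfrac{F^{ii}u_i^2}{(u-a)^2}$. For $u_{ii}$, I would differentiate $u = \langle X,\nu\rangle$ twice using Gauss and Weingarten: $u_i = \langle X_i,\nu\rangle + \langle X,\nu_i\rangle = h_{ij}\langle X,e_j\rangle$ (the first term vanishes since $X_i \perp \nu$), and then $u_{ii} = h_{iij}\langle X,e_j\rangle + h_{ij}\langle X_i, e_j\rangle + h_{ij}\langle X, (e_j)_i\rangle$, which after using $X_i = e_i$ (tangential), the frame diagonalization, and collecting gives $u_{ii} = \sum_j h_{iij}\langle X,e_j\rangle + h_{ii} - h_{ii}^2\langle X,\nu\rangle = \nabla_{e_i} H \cdot (\text{stuff}) + h_{ii} - u h_{ii}^2$; the precise bookkeeping yields $-\tfrac{F^{ii}u_{ii}}{u-a} = \tfrac{u F^{ii}h_{ii}^2}{u-a} - \tfrac{F^{ii}h_{ii}}{u-a} - \tfrac{1}{u-a}F^{ii}\sum_j h_{iij}\langle X,e_j\rangle$. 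Using $u \geq 2a$ so $\tfrac{u}{u-a} \geq \tfrac{a}{u-a}+1 \geq \tfrac{a}{u-a}$ isolates the good term $\tfrac{aF^{ii}h_{ii}^2}{u-a}$; the term $-\tfrac{F^{ii}h_{ii}}{u-a}$ is bounded by $-C\sum_i F^{ii}h_{ii}$ hence absorbed into $-Ch_{11} - CA$ (or dominated by a small fraction of the good quadratic term via Cauchy–Schwarz); and the third-order term $F^{ii}\sum_j h_{iij}\langle X,e_j\rangle$ is the one to be eliminated using the critical equation \eqref{hat Q i}, which I address next.

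For the $F^{ii}(\log h_{11})_{ii}$ term, I would write $(\log h_{11})_{ii} = \tfrac{h_{11ii}}{h_{11}} - \tfrac{h_{11i}^2}{h_{11}^2}$, immediately giving the bad third-order term $-\tfrac{F^{ii}h_{11i}^2}{h_{11}^2}$. For $h_{11ii}$ I would apply the commutation formula \eqref{Commutation formula} to swap to $h_{ii11}$, producing curvature-squared corrections of the form $(h_{mi}h_{i1} - h_{m1}h_{ii})h_{m1} + \cdots$; with the diagonal frame these become combinations like $h_{ii}h_{11}^2 - h_{ii}^2 h_{11}$ times $h_{11}$-weights, and when contracted with $F^{ii}$ and divided by $h_{11}$ they contribute terms bounded below by $-Ch_{11}$ plus possibly a beneficial term involving $F^{ii}h_{ii}^2$ (the standard Gauss-equation gain) — I would keep whichever sign is favorable and bound the rest by $-Ch_{11}$. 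The term $\tfrac{F^{ii}h_{ii11}}{h_{11}}$ requires differentiating the equation $G(\eta) = f^{1/k}$ (or $\sigma_k(\lambda(\eta)) = f$) twice in $e_1$; the first derivative gives $G^{jj}(\eta_{jj})_1 = (f^{1/k})_1$ and the second gives $G^{jj}(\eta_{jj})_{11} + G^{pp,qq}(\eta_{pp})_1(\eta_{qq})_1 = (f^{1/k})_{11}$. Since $\eta_{jj} = H - h_{jj}$, we have $(\eta_{jj})_{11} = H_{11} - h_{jj11}$ and $G^{jj}H_{11} = (\sum_j G^{jj}) \sum_m h_{mm11}$, so $F^{ii}h_{ii11} = \sum_i F^{ii} h_{ii11}$ gets reorganized — using $F^{ii} = \sum_{k\neq i} G^{kk}$ one checks $\sum_i F^{ii} h_{ii11} = (\sum G^{kk}) \sum_i h_{ii11} - G^{ii}h_{ii11}\cdot(\text{correction})$, ultimately expressing $\tfrac{F^{ii}h_{ii11}}{h_{11}}$ in terms of $\tfrac{(f^{1/k})_{11}}{h_{11}}$ (bounded, using $|\nabla\nu| = |h| \leq C h_{11}$ from Weingarten and $\|f\|_{C^2}$, so this is $\leq C h_{11}$ after the division — wait, more carefully $(f^{1/k})_{11}$ involves $h_{11i}$ through $\nu_1 = h_{1j}e_j$, giving terms like $f_\nu \cdot h_{11,1}$; these are linear in third derivatives and handled by Cauchy–Schwarz against the good quadratic terms) and the second-derivative concavity term $-\tfrac{1}{h_{11}}G^{pp,qq}(\eta_{pp})_1(\eta_{qq})_1$. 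Finally, I would use the standard identity for $\sigma_k^{1/k}$ that isolates the off-diagonal second derivatives: $G^{pp,qq}(\eta_{pp})_1(\eta_{qq})_1 = \sum_{p,q} G^{pp,qq}(\eta_{pp})_1(\eta_{qq})_1$, and combine with the inequality relating $G^{ij,kl}$ to $G^{1i,i1}$ (the term $\tfrac{2G^{1i,i1}}{G^{11}-G^{ii}}$-type formula for $\sigma_k^{1/k}$) to extract precisely $-\tfrac{2}{h_{11}}\sum_{i\geq 2} G^{1i,i1} h_{11i}^2$, using the Codazzi identity $h_{11i} = h_{1i1} = h_{i11}$ to match indices. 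The main obstacle is this last step: correctly tracking the relation between $\partial^2 G/\partial\eta^2$ evaluated on the $\eta$-eigenvalues and the curvature derivatives $h_{11i}$, since $\eta$ is the Newton transformation of $h$ (so $\lambda(\eta)$ is ordered oppositely to $\kappa$), and making sure the off-diagonal second-derivative terms of $G$ in the $\eta$-variables translate into the $-\tfrac{2}{h_{11}}\sum_{i\geq 2}G^{1i,i1}h_{11i}^2$ expression with the correct constant and sign; all the other terms are routine bounds of the form $-Ch_{11}$, $-CA$, or absorbed good terms. Throughout, the critical-point equation \eqref{hat Q i} is reserved to rewrite $h_{11i}/h_{11} = \tfrac{u_i}{u-a} - A\langle X,e_i\rangle$ wherever a single power of $h_{11i}$ appears (in particular to dispose of the third-order term $F^{ii}\sum_j h_{iij}\langle X,e_j\rangle$ from the $u$-computation), converting it into the already-accounted-for good and bounded terms.
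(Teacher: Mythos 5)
Your overall architecture matches the paper's proof (expand the three terms of \eqref{F ii hat Q ii}, use Gauss--Weingarten to compute $u_i,u_{ii}$ and $(|X|^2)_{ii}$, commute $h_{11ii}$ to $h_{ii11}$, differentiate the equation twice, use concavity to keep only the off-diagonal terms $-\tfrac{2}{h_{11}}\sum_{i\ge2}G^{1i,i1}h_{11i}^2$), but there is a genuine gap in your disposal of the third-order term coming from the $u$-computation. You propose to eliminate $\sum_{k}F^{ii}h_{iik}\langle X,e_k\rangle$ using the critical-point relation \eqref{hat Q i}. That relation only constrains the single family of derivatives $h_{11k}$ (it says $h_{11k}/h_{11}=u_k/(u-a)-A\langle X,e_k\rangle$); it gives no information about the full contraction $\sum_i F^{ii}h_{iik}$, which involves all diagonal third derivatives $h_{iik}$. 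The correct mechanism --- and the one the paper uses --- is the algebraic identity $F^{ii}h_{iik}=\big(\sum_jG^{jj}\big)H_k-\sum_iG^{ii}h_{iik}=G^{ii}\eta_{iik}$ (the analogue at first order of the identity you do invoke at second order for $F^{ii}h_{ii11}$), combined with the once-differentiated equation, giving $F^{ii}h_{iik}=(d_X\tilde f)(e_k)+h_{kk}(d_\nu\tilde f)(e_k)$ as in (\ref{C1 eqn 3}); the resulting $\nu$-term is then either paired with its twin $\tfrac{1}{h_{11}}\sum_k h_{k11}(d_\nu\tilde f)(e_k)$ via \eqref{hat Q i} (yielding exactly $-CA$, as in the paper) or bounded crudely by $Ch_{11}$. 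Without this identity your plan leaves an uncontrolled third-order term, so as written the argument does not close.

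A second, smaller but real error is your treatment of the commutation correction. After dividing by $h_{11}$, the correction from \eqref{Commutation formula} is $-F^{ii}h_{ii}^2+f^{1/k}h_{11}$; the piece $-F^{ii}h_{ii}^2$ is \emph{not} bounded below by $-Ch_{11}$ (it is of the same order as the good quadratic term, e.g.\ it contains $-F^{11}h_{11}^2$), nor is it ``beneficial.'' It must be cancelled exactly against the term $\tfrac{u}{u-a}F^{ii}h_{ii}^2$ produced by the $-F^{ii}(\log(u-a))_{ii}$ computation, using $\tfrac{u}{u-a}=1+\tfrac{a}{u-a}$, which is precisely how the surviving good term $\tfrac{a}{u-a}F^{ii}h_{ii}^2$ in the lemma arises. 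You have the needed extra $+F^{ii}h_{ii}^2$ in hand (your inequality $\tfrac{u}{u-a}\ge 1+\tfrac{a}{u-a}$ is in fact an equality), but you then discard it and claim the commutation term can be absorbed into $-Ch_{11}$; that step is false and the bookkeeping must be done by the cancellation just described. With these two corrections your outline reduces to the paper's argument.
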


\begin{proof}
We first deal with the term $\frac{A}{2}F^{ii}(|X|^{2})_{ii}$ in (\ref{F ii hat Q ii}). Since $\eta_{ii}=\sum_{j\neq i}h_{jj}$, we have
\[
\sum_{i}\eta_{ii} = (n-1)\sum_{i}h_{ii}, \quad
h_{ii} = \frac{1}{n-1}\sum_{k}\eta_{kk}-\eta_{ii}.
\]
It then follows that
\begin{equation}\label{Calculation eqn 1}
\begin{split}
\sum_{i}F^{ii}h_{ii}
= {} & \sum_{i}\left(\sum_{k}G^{kk}-G^{ii}\right)\left(\frac{1}{n-1}\sum_{l}\eta_{ll}-\eta_{ii}\right) \\
= {} & \sum_{i}G^{ii}\eta_{ii}
= \frac{1}{k}[\sigma_{k}(\eta)]^{\frac{1}{k}-1}\sum_{i}\eta_{ii}\sigma_{k-1}(\eta|i)
= f^{\frac{1}{k}}.
\end{split}
\end{equation}
Combining this with Gauss formula, we obtain
\begin{equation}\label{Calculation eqn 2}
\begin{split}
\frac{A}{2}F^{ii}(|X|^{2})_{ii}
= {} & A\sum_{i}F^{ii}(1+\langle X,X_{ii}\rangle) \\
= {} & A\sum_{i}F^{ii}(1-h_{ii}\langle X,\nu\rangle) \\
= {} & A\sum_{i}F^{ii}-Auf^{\frac{1}{k}}.
\end{split}
\end{equation}

For the term $-F^{ii}(\log(u-a))_{ii}$ in (\ref{F ii hat Q ii}), we compute
\[
-F^{ii}(\log(u-a))_{ii}
= -\frac{F^{ii}u_{ii}}{u-a}+\frac{F^{ii}u_{i}^{2}}{(u-a)^{2}}.
\]
Using Guass formula, Weingarten equation and Codazzi formula,
\[
u_{i} =  h_{ii}\langle X,e_{i}\rangle, \quad
u_{ii} = \sum_{k}h_{iik}\langle X,e_{k}\rangle-uh_{ii}^{2}+h_{ii}.
\]
It then follows that
\begin{equation}\label{Calculation eqn 7}
\begin{split}
& -F^{ii}(\log(u-a))_{ii} \\
= {} & -\frac{1}{u-a}\sum_{k}F^{ii}h_{iik}\langle X,e_{k}\rangle+\frac{uF^{ii}h_{ii}^{2}}{u-a}
-\frac{F^{ii}h_{ii}}{u-a}+\frac{F^{ii}u_{i}^{2}}{(u-a)^{2}} \\
= {} & -\frac{1}{u-a}\sum_{k}F^{ii}h_{iik}\langle X,e_{k}\rangle+\frac{uF^{ii}h_{ii}^{2}}{u-a}
-\frac{f^{\frac{1}{k}}}{u-a}+\frac{F^{ii}u_{i}^{2}}{(u-a)^{2}},
\end{split}
\end{equation}
where we used (\ref{Calculation eqn 1}) in the last line. By the definitions of $F^{ii}$ and $\eta_{ii}$, we have
\begin{equation}\label{Calculation eqn 3}
\begin{split}
F^{ii}h_{iik} = {} &  \left(\sum_{j}G^{jj}-G^{ii}\right)h_{iik} \\
= {} & \left(\sum_{i}G^{ii}\right)H_{k}-\sum_{i}G^{ii}h_{iik} = G^{ii}\eta_{iik}.
\end{split}
\end{equation}
On the other hand, the curvature equation \eqref{cj-1} can be written as
\begin{equation}\label{Curvature equation 2}
G(\eta) = \ti{f},
\end{equation}
where $\ti{f}=f^{\frac{1}{k}}$. Differentiating (\ref{Curvature equation 2}), we obtain
\[
G^{ii}\eta_{iik} = (d_{X}\ti{f})(e_{k})+h_{kk}(d_{\nu}\ti{f})(e_{k}).
\]
Then (\ref{Calculation eqn 3}) gives
\begin{equation}\label{C1 eqn 3}
F^{ii}h_{iik} = (d_{X}\ti{f})(e_{k})+h_{kk}(d_{\nu}\ti{f})(e_{k}).
\end{equation}
It then follows that
\[
-\frac{1}{u-a}\sum_{k}F^{ii}h_{iik}\langle X,e_{k}\rangle
\geq  -\frac{1}{u-a}\sum_{k}h_{kk}(d_{\nu}\ti{f})(e_{k})\langle X,e_{k}\rangle-C.
\]
Substituting this into (\ref{Calculation eqn 7}), we have
\begin{equation}\label{Calculation eqn 4}
\begin{split}
-F^{ii}(\log(u-a))_{ii}
\geq {} & -\frac{1}{u-a}\sum_{k}h_{kk}(d_{\nu}\ti{f})(e_{k})\langle X,e_{k}\rangle \\
& +\frac{uF^{ii}h_{ii}^{2}}{u-a}+\frac{F^{ii}u_{i}^{2}}{(u-a)^{2}}-C.
\end{split}
\end{equation}

For the term $F^{ii}(\log h_{11})_{ii}$ in (\ref{F ii hat Q ii}), we compute
\begin{equation}\label{Calculation eqn 6}
F^{ii}(\log h_{11})_{ii} = \frac{F^{ii}h_{11ii}}{h_{11}}-\frac{F^{ii}h_{11i}^{2}}{h_{11}^{2}}.
\end{equation}
By (\ref{Commutation formula}) and (\ref{Calculation eqn 1}), we have
\[
\begin{split}
F^{ii}h_{11ii}
= {} & F^{ii}h_{ii11}+F^{ii}(h_{i1}^{2}-h_{ii}h_{11})h_{ii}+F^{ii}(h_{ii}h_{11}-h_{i1}^{2})h_{11} \\
= {} & F^{ii}h_{ii11}-F^{ii}h_{ii}^{2}h_{11}+F^{ii}h_{ii}h_{11}^{2} \\
= {} & F^{ii}h_{ii11}-F^{ii}h_{ii}^{2}h_{11}+f^{\frac{1}{k}}h_{11}^{2}.
\end{split}
\]
Differentiating (\ref{Curvature equation 2}) twice and using the similar argument of (\ref{Calculation eqn 3}), we obtain
\[
F^{ii}h_{ii11} = G^{ii} \eta_{ii11} \geq -G^{ij,kl}\eta_{ij1}\eta_{kl1}+\sum_{k}h_{k11}(d_{\nu}\ti{f})(e_{k})-Ch_{11}^{2}-C.
\]
Applying the concavity of $G$ and Codazzi formula, we have
\[
-G^{ij,kl}\eta_{ij1}\eta_{kl1}
\geq -2\sum_{i\geq2}G^{1i,i1}\eta_{1i1}^{2}
= -2\sum_{i\geq2}G^{1i,i1}h_{1i1}^{2}
= -2\sum_{i\geq2}G^{1i,i1}h_{11i}^{2}.
\]
It then follows that
\[
F^{ii}h_{11ii}
\geq -2\sum_{i\geq2}G^{1i,i1}h_{11i}^{2}+\sum_{k}h_{k11}(d_{\nu}\ti{f})(e_{k})-F^{ii}h_{ii}^{2}h_{11}-Ch_{11}^{2}-C.
\]
Substituting this into (\ref{Calculation eqn 6}),
\begin{equation}\label{Calculation eqn 5}
\begin{split}
F^{ii}(\log h_{11})_{ii}
\geq {} & -\frac{2}{h_{11}}\sum_{i\geq2}G^{1i,i1}h_{11i}^{2}+\frac{1}{h_{11}}\sum_{k}h_{k11}(d_{\nu}\ti{f})(e_{k}) \\
& -\frac{F^{ii}h_{11i}^{2}}{h_{11}^{2}}-F^{ii}h_{ii}^{2}-Ch_{11}.
\end{split}
\end{equation}
Combining (\ref{F ii hat Q ii}), (\ref{Calculation eqn 2}), (\ref{Calculation eqn 4}) and (\ref{Calculation eqn 5}), we obtain
\[
\begin{split}
0 \geq F^{ii}\hat{Q}_{ii}
\geq {} & -\frac{2}{h_{11}}\sum_{i\geq2}G^{1i,i1}h_{11i}^{2}-\frac{F^{ii}h_{11i}^{2}}{h_{11}^{2}} \\
& +\frac{1}{h_{11}}\sum_{k}h_{k11}(d_{\nu}\ti{f})(e_{k})-\frac{1}{u-a}\sum_{k}h_{kk}(d_{\nu}\ti{f})(e_{k})\langle X,e_{k}\rangle \\
& +\frac{aF^{ii}h_{ii}^{2}}{u-a}+\frac{F^{ii}u_{i}^{2}}{(u-a)^{2}}
-Ch_{11}+A\sum_{i}F^{ii}-CA.
\end{split}
\]
By Codazzi formula, $u_{k}=h_{kk}\langle X,e_{k}\rangle$ and (\ref{hat Q i}), we have
\[
\begin{split}
& \frac{1}{h_{11}}\sum_{k}h_{k11}(d_{\nu}\ti{f})(e_{k})-\frac{1}{u-a}\sum_{k}h_{kk}(d_{\nu}\ti{f})(e_{k})\langle e_{k},X\rangle \\
= {} & \sum_{k}\left(\frac{h_{11k}}{h_{11}}-\frac{u_{k}}{u-a}\right)(d_{\nu}\ti{f})(e_{k})
= -A\sum_{k}(d_{\nu}\ti{f})(e_{k})\langle X,e_{k}\rangle \geq -CA.
\end{split}
\]
Therefore,
\[
\begin{split}
0 \geq {} & -\frac{2}{h_{11}}\sum_{i\geq2}G^{1i,i1}h_{11i}^{2}-\frac{F^{ii}h_{11i}^{2}}{h_{11}^{2}} \\
& +\frac{aF^{ii}h_{ii}^{2}}{u-a}+\frac{F^{ii}u_{i}^{2}}{(u-a)^{2}}
-Ch_{11}+A\sum_{i}F^{ii}-CA,
\end{split}
\]
as required.
\end{proof}

Next, we deal with the bad term $-Ch_{11}$.

\begin{lemma}\label{Bad term}
If $h_{11}\geq CA$ and $A\geq C$ for some uniform constant $C$, then we have
\[
\begin{split}
Ch_{11} \leq \frac{aF^{ii}h_{ii}^{2}}{2(u-a)}+\frac{A}{2}\sum_{i}F^{ii}.
\end{split}
\]
\end{lemma}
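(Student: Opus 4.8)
The plan is to split the argument into two regimes according to the position of the smallest principal curvature $h_{nn}$ relative to $-\delta h_{11}$ for a small uniform constant $\delta>0$ to be chosen. First I would record the consequence of $\eta\in\Gamma_k$ that we need: since $\eta_{11}\leq\cdots\leq\eta_{nn}$ and $\sigma_1(\eta)>0$, $\sigma_2(\eta)>0$, the term $\eta_{nn}$ (the largest) is comparable to the mean curvature, and more importantly the $(\eta,k)$-convexity forces a \textbf{partial curvature estimate}: at most $k-1$ of the $\eta_{ii}$ can be negative, equivalently at most $k-1$ of the $h_{jj}$ can be large compared to the others — in any case $\eta_{11}\geq -C\sum_{i}F^{ii}\cdot(\text{something})$. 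Concretely, for indices $i$ with $\eta_{ii}\geq 0$ we get a clean lower bound on $G^{ii}$, while for the (at most $k-1$) indices with $\eta_{ii}<0$ one has the algebraic fact $G^{ii}\geq c_0 \sum_j G^{jj}$ since $G=\sigma_k^{1/k}$ and $\eta_{ii}$ small/negative means $\sigma_{k-1}(\eta|i)$ is the dominant minor.

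The key mechanism for absorbing $-Ch_{11}$ is that $h_{11}$ being large makes $\eta_{ii}=\sum_{j\neq i}h_{jj}$ large for every $i\neq 1$, hence $h_{ii}^2$ contributes to $F^{ii}h_{ii}^2$. More precisely: for each fixed $i\geq 2$ we have $\eta_{ii}\geq h_{11}+h_{nn}$ (dropping the nonnegative middle terms is not quite right because $h_{nn}$ can be negative, so one keeps $\eta_{ii}\geq h_{11}-\sum_{j\geq 2}|h_{jj}|$, but in the good case $h_{nn}\geq -\delta h_{11}$ this gives $\eta_{ii}\geq (1-(n-1)\delta)h_{11}\geq \tfrac12 h_{11}$). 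Step one is therefore: \emph{Good case} $h_{nn}\geq -\delta h_{11}$. Here all $|h_{ii}|\leq \delta h_{11}$ for $i\geq 2$, forcing $\eta_{ii}\geq \tfrac12 h_{11}$ and $\eta_{nn}\geq \tfrac12 h_{11}$ too; since $\eta_{nn}$ is the largest entry, $\sigma_{k-1}(\eta|n)\geq c\,\sigma_{k-1}(\eta)\cdot(\cdots)$ and one gets $F^{nn}=\sum_{k\neq n}G^{kk}\geq G^{11}\geq c\,[\sigma_k(\eta)]^{1/k-1}\sigma_{k-1}(\eta|1)$, which is bounded \emph{below} by a uniform constant times $h_{11}^{k-1}\cdot h_{11}^{-(k-1)}\cdot(\text{stuff})$ — the cleanest route is to note $G^{11}\sigma_1(\eta)\geq G(\eta)=\ti f\geq (\inf f)^{1/k}>0$ combined with $\sigma_1(\eta)=(n-1)\sigma_1(h)\leq C h_{11}$, giving $G^{11}\geq c/h_{11}$, whence $F^{nn}h_{nn}^2\geq G^{11}h_{nn}^2$ is not directly useful (we want $h_{11}$), so instead use $F^{nn}\geq G^{11}\geq c/h_{11}$ and $F^{11}h_{11}^2\geq \big(\sum_{i\geq 2}G^{ii}\big)h_{11}^2$; since at least $n-1\geq k$ of the $\eta_{ii}$ are $\geq \tfrac12 h_{11}$ the sum $\sum_{i\geq 2}G^{ii}$ is bounded below by $c\,[\sigma_k(\eta)]^{1/k-1}\sigma_{k-1}\geq c\,h_{11}^{-1}$ times a factor, so $F^{11}h_{11}^2\geq c\,h_{11}$. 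This is exactly of the form $\geq C h_{11}$ with room to spare once $A$ and $h_{11}$ are large, so $\tfrac{a}{2(u-a)}F^{ii}h_{ii}^2$ alone dominates $Ch_{11}$.

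Step two is the \emph{Bad case} $h_{nn}<-\delta h_{11}$: then I would discard $-Ch_{11}$ using the $A\sum_i F^{ii}$ term instead. Here $\eta_{11}=\sum_{j\neq 1}h_{jj}$ could be very negative (it is $\leq h_{nn}<0$), and the point is that $\eta_{11}<0$ together with $\eta\in\Gamma_k$ forces $G^{11}\geq c_0\sum_i G^{ii}$ by the standard fact that when $\lambda_1$ is the smallest eigenvalue and $\lambda_1\le 0<\sigma_k$, we have $\sigma_{k-1}(\lambda|1)\geq c(n,k)\sigma_{k-1}(\lambda)\ge c'\,\sigma_{k-1}(\lambda|i)$ for all $i$. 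Consequently $F^{nn}\geq G^{11}\geq c_0\sum_i G^{ii}$, and moreover $h_{nn}^2\geq \delta^2 h_{11}^2$, so $F^{ii}h_{ii}^2\geq F^{nn}h_{nn}^2\geq c_0\delta^2 h_{11}^2\sum_i G^{ii}\geq c_0\delta^2 h_{11}^2\cdot\frac{\sum_i F^{ii}}{n-1}$ (using $\sum_i G^{ii}$ comparable to $\sum_i F^{ii}$ up to the factor $n-1$); hence $\frac{a}{2(u-a)}F^{ii}h_{ii}^2\geq c\,h_{11}^2\sum_i F^{ii}\geq c\,h_{11}^2\cdot c_0\delta^2\geq Ch_{11}$ once $h_{11}\geq C/(c_0\delta^2)$, and again the inequality holds with the stated split between the two good terms. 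In both cases the thresholds ``$h_{11}\geq CA$'' and ``$A\geq C$'' are what let us absorb the additive constants and compare $h_{11}$ with $h_{11}^2$.

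The main obstacle I anticipate is making the \textbf{partial curvature estimate / lower bound on the good $G^{ii}$} fully rigorous and uniform: one must control $[\sigma_k(\eta)]^{1/k-1}$ from below, which requires an \emph{upper} bound on $\sigma_k(\eta)$, i.e. on $\eta_{nn}^k$, i.e. on $h_{11}^k$ — which is precisely what we are trying to bound, so the argument cannot use that. The resolution (and the delicate point) is that we never need $[\sigma_k(\eta)]^{1/k-1}$ in isolation; we always pair $G^{ii}$ either with $\sigma_1(\eta)$ (via $G^{11}\sigma_1(\eta)\geq G(\eta)=\ti f$, which gives $G^{11}\geq c/h_{11}$, scale-invariant in the right way) or we use the \emph{relative} comparison $G^{nn}\ge c_0\sum G^{ii}$ which is homogeneous of degree zero and hence needs no size bound at all. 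Getting these two dichotomy branches to interface cleanly — in particular checking the good case really does produce a factor that beats $h_{11}$ and not merely $h_{11}^{1/2}$ — is where I would spend the most care, together with tracking the constant $\delta$ so that it depends only on $n,k$.
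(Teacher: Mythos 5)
Your Case ``$h_{nn}<-\delta h_{11}$'' is essentially the paper's Case 2 and is fine, except that you do not need the delicate fact about $\sigma_{k-1}(\lambda|1)$ when $\lambda_1\leq 0$ (nor the claim that at most $k-1$ of the $\eta_{ii}$ are negative, which is not correct as stated --- in $\Gamma_k$ one only knows at most $n-k$ entries are non-positive): since $\eta_{11}$ is the smallest eigenvalue, $G^{11}=\max_iG^{ii}\geq\frac1n\sum_iG^{ii}=\frac{1}{n(n-1)}\sum_iF^{ii}$ always, and Maclaurin's inequality gives $\sum_iF^{ii}\geq 1/C$, so $F^{nn}h_{nn}^2\geq\frac{\delta^2h_{11}^2}{C}\sum_iF^{ii}\geq\frac{\delta^2}{C}h_{11}^2$ dominates $Ch_{11}$ once $h_{11}\geq C/\delta^2$. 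This is exactly how the paper argues.

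The genuine gap is in your ``good case''. First, $h_{nn}\geq-\delta h_{11}$ does not force $|h_{ii}|\leq\delta h_{11}$ for $i\geq2$ ($h_{22}$ may be comparable to $h_{11}$); the correct dichotomy is ``$|h_{ii}|\leq\delta h_{11}$ for all $i\geq2$'' versus ``$h_{22}>\delta h_{11}$ or $h_{nn}<-\delta h_{11}$'', and the second alternative is handled by the quadratic argument above. More seriously, in the genuinely flat case you try to absorb $Ch_{11}$ into $\frac{a}{2(u-a)}F^{ii}h_{ii}^2$ alone, via $G^{11}\geq c/h_{11}$ and hence $F^{11}h_{11}^2\geq c\,h_{11}$. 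This only produces $c'h_{11}$ with $c'$ a fixed constant determined by $n,k,a,u,\inf f,\sup f$, which has no reason to exceed the constant $C$ coming from Lemma \ref{Calculation}; and there is no ``room to spare'': $A$ does not appear in the $F^{ii}h_{ii}^2$ term, and enlarging $h_{11}$ scales both sides linearly. The estimate is in fact sharp in order: for $k=n$ one can have $h_{ii}\sim h_{11}^{-(n-1)}$ for $i\geq2$, and then $\sum_iF^{ii}h_{ii}^2$ is genuinely of size $O(h_{11})$. The correct move --- and the reason the term $\frac A2\sum_iF^{ii}$ is in the statement and $A$ is a free large parameter --- is the paper's Case 1: when $|h_{ii}|\leq\delta h_{11}$ for $i\geq2$ one has $\sigma_{k-1}(\eta)\geq\frac12h_{11}^{k-1}\geq\frac12h_{11}$ (using $k\geq2$ and $h_{11}$ large), so
\[
\sum_iF^{ii}=\frac{(n-1)(n-k+1)}{k}\,[\sigma_k(\eta)]^{\frac1k-1}\sigma_{k-1}(\eta)\geq\frac{h_{11}}{C},
\]
and then $Ch_{11}\leq\frac A2\sum_iF^{ii}$ after choosing $A$ sufficiently large. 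Your sketch gestures at $\sum_{i\geq2}G^{ii}$ being bounded below but misstates the bound ($[\sigma_k]^{1/k-1}\sigma_{k-1}$ is at least a constant by Maclaurin, and at least $c\,h_{11}^{k-1}$ in this case, not $c\,h_{11}^{-1}$) and then feeds it into the wrong term; as written, the flat case is not closed.
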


\begin{proof}
The proof splits into two cases. The positive constant $\delta$ will be determined later.
\bigskip

\noindent
{\bf Case 1.} \ $|h_{ii}| \leq \delta h_{11}$ for all $i\geq2$.

\bigskip

In this case, we have
\[
|\eta_{11}|  \leq (n-1)\delta h_{11}
\]
and
\[
[1-(n-2)\delta]h_{11} \leq \eta_{22}
\leq \cdots \leq \eta_{nn} \leq [1+(n-2)\delta]h_{11}.
\]
It then follows that
\[
\sigma_{k-1}(\eta) = \sigma_{k-1}(\eta|1)+\eta_{11}\sigma_{k-2}(\eta|1)
\geq (1-C\delta)h_{11}^{k-1}-C\delta h_{11}^{k-1}.
\]
Choosing $\delta$ sufficiently small and using $k\geq2$,
\begin{equation}\label{Case 1 eqn 1}
\sigma_{k-1}(\eta) \geq \frac{h_{11}^{k-1}}{2} \geq \frac{h_{11}}{2}.
\end{equation}
By the definition of $G^{ii}$ and $F^{ii}$, we obtain
\begin{equation}\label{Case 1 eqn 2}
\sum_{i}F^{ii} = (n-1)\sum_{i}G^{ii}
= \frac{(n-1)(n-k+1)}{k}[\sigma_{k}(\eta)]^{\frac{1}{k}-1}\sigma_{k-1}(\eta).
\end{equation}
Thanks to $\sigma_{k}(\eta)=f$, we have
\[
\sum_{i}F^{ii}
=  \frac{(n-1)(n-k+1)}{k}f^{\frac{1}{k}-1}\sigma_{k-1}(\eta)
\geq \frac{\sigma_{k-1}(\eta)}{C}.
\]
Combining this with (\ref{Case 1 eqn 1}), and choosing $A$ sufficiently large, we obtain
\[
Ch_{11} \leq C\sigma_{k-1}(\eta) \leq \frac{A}{2}\sum_{i}F^{ii},
\]
as required.

\bigskip

\noindent
{\bf Case 2.} \ $h_{22}>\delta h_{11}$ or $h_{nn}<-\delta h_{11}$ .

\bigskip

In this case, we have
\[
\frac{aF^{ii}h_{ii}^{2}}{2(u-a)}
\geq \frac{1}{C}(F^{22}h_{22}^{2}+F^{nn}h_{nn}^{2})
\geq \frac{\delta^{2}}{C}F^{22}h_{11}^{2}.
\]
By the definitions of $F^{ii}$ and $G^{ii}$,
\begin{equation}\label{Case 2 eqn 1}
F^{22} = \sum_{i\neq 2}G^{ii} \geq G^{11} \geq \frac{1}{n}\sum_{i}G^{ii} = \frac{1}{n(n-1)}\sum_{i}F^{ii}.
\end{equation}
It then follows that
\begin{equation}\label{Case 2 eqn 2}
\frac{aF^{ii}h_{ii}^{2}}{2(u-a)}
\geq \frac{\delta^{2}h_{11}^{2}}{C}\sum_{i}F^{ii}.
\end{equation}
Using (\ref{Case 1 eqn 2}) and Maclaurin's inequality,
\begin{equation}\label{Case 2 eqn 3}
\sum_{i}F^{ii} =  \frac{(n-1)(n-k+1)}{k}[\sigma_{k}(\eta)]^{\frac{1}{k}-1}\sigma_{k-1}(\eta)
\geq \frac{1}{C}.
\end{equation}
Combining this with (\ref{Case 2 eqn 2}), if $h_{11}\geq\frac{C}{\delta^{2}}$, we obtain
\[
Ch_{11} \leq \frac{\delta^{2}h_{11}^{2}}{C}\sum_{i}F^{ii}
\leq \frac{aF^{ii}h_{ii}^{2}}{2(u-a)},
\]
as required.
\end{proof}

Combining Lemma \ref{Calculation} and \ref{Bad term}, we obtain
\begin{equation}\label{Inequality}
\begin{split}
0 \geq {} & -\frac{2}{h_{11}}\sum_{i\geq 2}G^{1i,i1}h_{11i}^{2}-\frac{F^{ii}h_{11i}^{2}}{h_{11}^{2}} \\
& +\frac{aF^{ii}h_{ii}^{2}}{2(u-a)}+\frac{F^{ii}u_{i}^{2}}{(u-a)^{2}}+\frac{A}{2}\sum_{i}F^{ii}-CA.
\end{split}
\end{equation}

The following lemma can be regarded as the partial curvature estimate.

\begin{lemma}\label{Partial curvature estimate}
If $h_{11}\geq CA$ and $A\geq C$ for some uniform constant $C$, then we have
\[
|h_{ii}| \leq CA, \quad \text{for $i\geq 2$}.
\]
\end{lemma}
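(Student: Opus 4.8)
The plan is to argue by contradiction from the inequality \eqref{Inequality}. Suppose that for some $i \geq 2$ we have $|h_{ii}| > CA$ with $C$ large; I want to show this forces a contradiction once $h_{11}$ is large. The key structural fact is that the term $\tfrac{a F^{ii} h_{ii}^2}{2(u-a)}$ in \eqref{Inequality} is a \emph{good} (positive) term that grows quadratically in any large $|h_{ii}|$, while the only negative terms are $-CA$ and the third-order terms
\[
-\frac{2}{h_{11}}\sum_{i\geq 2} G^{1i,i1} h_{11i}^2 - \frac{F^{ii} h_{11i}^2}{h_{11}^2}.
\]
So the first step is to understand the sign and size of the third-order terms. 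Using the standard formula for the second derivatives of $\sigma_k^{1/k}$ along the diagonal, one has $G^{1i,i1} \leq 0$ for each $i$ (this is where concavity of $G$ enters, in its sharp ``off-diagonal'' form), so $-\tfrac{2}{h_{11}}\sum_{i\geq 2} G^{1i,i1} h_{11i}^2 \geq 0$ — this term is actually on the \emph{good} side. Hence the only genuinely bad third-order term is $-\tfrac{F^{ii} h_{11i}^2}{h_{11}^2}$, and it is matched in magnitude (with a favorable constant, after the off-diagonal contributions of $G^{1i,i1}$ are accounted for) by a portion of the good third-order term. I would make this precise by the usual splitting into indices with $h_{ii}$ close to $h_{11}$ versus far from $h_{11}$.

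Next I would exploit the first-order relation \eqref{hat Q i}: $\tfrac{h_{11i}}{h_{11}} = \tfrac{u_i}{u-a} - A\langle X, e_i\rangle$, so that $h_{11i}^2 \leq \tfrac{2 h_{11}^2 u_i^2}{(u-a)^2} + 2 A^2 h_{11}^2 \langle X, e_i\rangle^2$, and therefore
\[
\frac{F^{ii} h_{11i}^2}{h_{11}^2} \leq \frac{2 F^{ii} u_i^2}{(u-a)^2} + C A^2 \sum_i F^{ii}.
\]
The first piece is absorbed by the good term $\tfrac{F^{ii} u_i^2}{(u-a)^2}$ in \eqref{Inequality} (up to a harmless factor; one can carry an extra copy of it, or simply redo Lemma \ref{Calculation} keeping $2 \tfrac{F^{ii} u_i^2}{(u-a)^2}$, which the proof there allows). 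The second piece, $C A^2 \sum_i F^{ii}$, must be dominated by $\tfrac{A}{2}\sum_i F^{ii}$ — but that fails for large $A$. This is the main obstacle: the naive bound on $h_{11i}$ via \eqref{hat Q i} is too lossy. The resolution, which I expect to be the crux, is that on the set where $|h_{ii}|$ is large one does \emph{not} need to bound all of $h_{11i}$; instead one isolates the single bad index $i$, uses \eqref{Inequality} to conclude that $\tfrac{a F^{ii} h_{ii}^2}{2(u-a)}$ (and hence, since $F^{ii} \geq F^{11} \geq \tfrac{1}{n(n-1)}\sum_j F^{jj}$ as in \eqref{Case 2 eqn 1}, also $h_{ii}^2 \sum_j F^{jj}$) is bounded by the bad third-order term plus $CA$, and then feeds back the first-order identity only for that index. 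Concretely: from \eqref{Inequality},
\[
\frac{a F^{ii} h_{ii}^2}{2(u-a)} + \frac{F^{ii} u_i^2}{(u-a)^2} + \frac{A}{2}\sum_i F^{ii} \leq \frac{F^{ii} h_{11i}^2}{h_{11}^2} + CA \leq \frac{2 F^{ii} u_i^2}{(u-a)^2} + C A^2 \langle X, e_i\rangle^2 F^{ii} + CA,
\]
so $\tfrac{a F^{ii} h_{ii}^2}{2(u-a)} \leq C A^2 F^{ii} + CA \leq C A^2 F^{ii}$ (using $\sum F^{ii} \geq 1/C$ from \eqref{Case 2 eqn 3} to absorb $CA$ into $CA^2 \sum F^{ii}$, and then into $CA^2 F^{ii}$ via \eqref{Case 2 eqn 1}); dividing by $F^{ii}$ gives $h_{ii}^2 \leq C A^2$, i.e. $|h_{ii}| \leq CA$, as claimed. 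The point is that the $F^{ii}$ in front of $h_{ii}^2$ and the $F^{ii}$ coming from the $CA^2\langle X,e_i\rangle^2 F^{ii}$ term are the \emph{same}, so no summation over $i$ is needed and the bad factor $\sum_i F^{ii}$ never appears with the wrong power of $A$.

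Finally I would record that the hypotheses $h_{11} \geq CA$ and $A \geq C$ are used exactly as in Lemma \ref{Bad term}: to ensure that the reduction of \eqref{F ii hat Q ii} to \eqref{Inequality} is valid and that all the ``$-C$'' error terms collected along the way (from $d_X\tilde f$, from $\langle X, e_k\rangle$, and from the rough estimate $-Ch_{11}^2 - C$ on $F^{ii}h_{ii11}$ divided by $h_{11}$) are genuinely lower order — $Ch_{11}^2/h_{11} = Ch_{11} \geq CA$ is already handled by Lemma \ref{Bad term}, and the remaining $-C/h_{11}$ terms are bounded by a constant. I do not expect any further difficulty beyond bookkeeping once the non-summation trick above is in place; the one point to double-check is that the coefficient in front of the good third-order term $-\tfrac{2}{h_{11}}\sum_{i\geq 2}G^{1i,i1}h_{11i}^2$ is large enough to absorb the bad $-\tfrac{F^{ii}h_{11i}^2}{h_{11}^2}$ for the indices $i$ with $h_{ii}$ comparable to $h_{11}$, which is the standard ``$\sigma_k^{1/k}$ concavity gives an inverse-concavity gain'' computation and is where $k \geq 2$ is needed.
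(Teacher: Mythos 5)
Your route is essentially the paper's: start from \eqref{Inequality}, drop the nonnegative term $-\frac{2}{h_{11}}\sum_{i\geq2}G^{1i,i1}h_{11i}^{2}$, control $F^{ii}h_{11i}^{2}/h_{11}^{2}$ via the critical equation \eqref{hat Q i} and Cauchy--Schwarz, absorb the $u_i$-part using $u_{i}=h_{ii}\langle X,e_{i}\rangle$, and conclude with $\sum_{i}F^{ii}\geq 1/C$ and the comparability of $F^{ii}$ with $\sum_{k}F^{kk}$ for $i\geq2$. However, one quantitative step does not close as you wrote it. Using Cauchy--Schwarz with the factor $2$ leaves an extra full copy of $\frac{F^{ii}u_{i}^{2}}{(u-a)^{2}}$ on the bad side; since $u_{i}=h_{ii}\langle X,e_{i}\rangle$, this copy has size $C\,F^{ii}h_{ii}^{2}$ with an uncontrolled constant, which need not be dominated by the good term $\frac{aF^{ii}h_{ii}^{2}}{2(u-a)}$, so the passage to ``$\frac{aF^{ii}h_{ii}^{2}}{2(u-a)}\leq CA^{2}F^{ii}+CA$'' silently discards a term as large as the left-hand side. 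Neither of your suggested repairs works: you cannot ``redo Lemma \ref{Calculation} keeping $2\frac{F^{ii}u_{i}^{2}}{(u-a)^{2}}$,'' because that coefficient comes from the exact identity $-F^{ii}(\log(u-a))_{ii}=-\frac{F^{ii}u_{ii}}{u-a}+\frac{F^{ii}u_{i}^{2}}{(u-a)^{2}}$ and is exactly $1$. The correct fix, and what the paper does, is to use $(1+\ve)$ in Cauchy--Schwarz: then only the $\ve$-portion $\frac{\ve F^{ii}u_{i}^{2}}{(u-a)^{2}}\leq\frac{C\ve F^{ii}h_{ii}^{2}}{(u-a)^{2}}$ must be absorbed by $\frac{aF^{ii}h_{ii}^{2}}{2(u-a)}$, which succeeds for $\ve$ small at the cost of $\frac{CA^{2}}{\ve}\sum_{i}F^{ii}$ --- harmless, since that is exactly the term you compare with $F^{ii}h_{ii}^{2}$ at the end.

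Two further points of precision. The inequality $F^{11}\geq\frac{1}{n(n-1)}\sum_{j}F^{jj}$ that you cite ``as in \eqref{Case 2 eqn 1}'' is false in general: $F^{11}=\sum_{k\neq1}G^{kk}$ omits the largest entry $G^{11}$ and can be much smaller than $\sum_{j}F^{jj}$. What \eqref{Case 2 eqn 1} actually gives is $F^{ii}\geq F^{22}\geq G^{11}\geq\frac{1}{n(n-1)}\sum_{k}F^{kk}$ for $i\geq2$, which is all that is needed since the lemma only concerns $i\geq2$. Also, the ``isolate a single index'' narration should be read with summation kept throughout: the bad term is the full sum $\sum_{j}F^{jj}h_{11j}^{2}/h_{11}^{2}$, so \eqref{hat Q i} must be applied for every index $j$; only at the very end does one lower-bound $\sum_{j}F^{jj}h_{jj}^{2}$ by $\frac{1}{n(n-1)}\big(\sum_{j}F^{jj}\big)\sum_{k\geq2}h_{kk}^{2}$ and divide by $\sum_{j}F^{jj}$, which is how the paper extracts the per-index bound. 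Finally, your opening discussion about balancing the bad third-order term against $-G^{1i,i1}$ for indices with $h_{ii}$ comparable to $h_{11}$ is not needed here --- that mechanism, together with \eqref{Case 2 eqn 3} and Lemma \ref{Bad term}, only enters in the proof of Theorem \ref{Curvature estimate} itself; in this lemma the whole good third-order term is simply dropped.
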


\begin{proof}
Using (\ref{hat Q i}) and the Cauchy-Schwarz inequality,
\[
\frac{F^{ii}h_{11i}^{2}}{h_{11}^{2}}
\leq \frac{1+\ve}{(u-a)^{2}}F^{ii}u_{i}^{2}+\left(1+\frac{1}{\ve}\right)A^{2}F^{ii}\langle X,e_{i}\rangle^{2}.
\]
Substituting this into (\ref{Inequality}) and dropping the positive term $-\frac{2}{h_{11}}\sum_{i\geq2}G^{1i,i1}h_{11i}^{2}$, we have
\[
\begin{split}
0 \geq {} & \frac{aF^{ii}h_{ii}^{2}}{2(u-a)}-\frac{\ve F^{ii}u_{i}^{2}}{(u-a)^{2}}-\frac{CA^{2}}{\ve}\sum_{i}F^{ii}-CA \\
\geq {} & \left(\frac{a}{2(u-a)}-\frac{C\ve}{(u-a)^{2}}\right)F^{ii}h_{ii}^{2}-\frac{CA^{2}}{\ve}\sum_{i}F^{ii}-CA,
\end{split}
\]
where we used $u_{i}=h_{ii}\langle X,e_{i}\rangle$ in the second inequality. Choosing $\ve$ sufficiently small and recalling (\ref{Case 2 eqn 3}), we obtain
\begin{equation}\label{Partial curvature estimate eqn 1}
0 \geq \frac{F^{ii}h_{ii}^{2}}{C}-\frac{CA^{2}}{\ve}\sum_{i}F^{ii}.
\end{equation}
By (\ref{Case 2 eqn 1}), for $i\geq2$, we have
\[
F^{ii} \geq F^{22} \geq \frac{1}{n(n-1)}\sum_{k}F^{kk}.
\]
Then (\ref{Partial curvature estimate eqn 1}) gives
\[
0 \geq \frac{1}{C}\left(\sum_{i}F^{ii}\right)\left(\sum_{k\geq2}h_{kk}^{2}\right)-\frac{CA^{2}}{\ve}\sum_{i}F^{ii}.
\]
which implies
\[
\sum_{k\geq2}h_{kk}^{2} \leq CA^{2},
\]
as required.
\end{proof}

Now we are in a position to prove Theorem \ref{Curvature estimate}.

\begin{proof}[Proof of Theorem \ref{Curvature estimate}]
By (\ref{hat Q i}) and the Cauchy-Schwarz inequality, we have
\[
\frac{F^{11}h_{111}^{2}}{h_{11}^{2}}
\leq \frac{1+\ve}{(u-a)^{2}}F^{11}u_{1}^{2}+\left(1+\frac{1}{\ve}\right)A^{2}F^{11}\langle X,e_{1}\rangle^{2}.
\]
Recalling $u_{1}=h_{11}\langle\de_{1},X\rangle$ and choosing $\ve$ sufficiently small,
\[
\begin{split}
\frac{F^{11}h_{111}^{2}}{h_{11}^{2}}
\leq {} &\frac{F^{11}u_{1}^{2}}{(u-a)^{2}}+\frac{\ve F^{11}u_{1}^{2}}{(u-a)^{2}}+\frac{CA^{2}F^{11}}{\ve} \\
\leq {} & \frac{F^{11}u_{1}^{2}}{(u-a)^{2}}+\frac{C\ve F^{11}h_{11}^{2}}{(u-a)^{2}}+\frac{CA^{2}F^{11}}{\ve} \\
\leq {} & \frac{F^{11}u_{1}^{2}}{(u-a)^{2}}+\frac{aF^{ii}h_{ii}^{2}}{16(u-a)}+\frac{CA^{2}F^{11}}{\ve}.
\end{split}
\]
Without loss of generality, we assume that $h_{11}^{2}\geq\frac{CA^{2}}{\ve}$, which implies
\[
\frac{CA^{2}F^{11}}{\ve} \leq \frac{aF^{ii}h_{ii}^{2}}{16(u-a)}.
\]
It then follows that
\begin{equation}\label{Curvature estimate eqn 1}
\frac{F^{11}h_{111}^{2}}{h_{11}^{2}}
\leq \frac{F^{11}u_{1}^{2}}{(u-a)^{2}}+\frac{aF^{ii}h_{ii}^{2}}{8(u-a)}.
\end{equation}

Thanks to Lemma \ref{Partial curvature estimate}, we assume that $|h_{ii}|\leq\delta h_{11}$ for $i\geq2$, where $\delta$ is a constant to be determined later. Thus,
\[
\frac{1}{h_{11}} \leq \frac{1+\delta}{h_{11}-h_{ii}}.
\]
Combining this with $-G^{1i,i1}=\frac{G^{11}-G^{ii}}{\eta_{ii}-\eta_{11}}$ for $i\geq2$, we obtain
\begin{equation}\label{Curvature estimate eqn 2}
\begin{split}
\sum_{i\geq2}\frac{F^{ii}h_{11i}^{2}}{h_{11}^{2}}
= {} & \sum_{i\geq2}\frac{F^{ii}-F^{11}}{h_{11}^{2}}h_{11i}^{2}+\sum_{i\geq2}\frac{F^{11}h_{11i}^{2}}{h_{11}^{2}} \\
\leq {} & \frac{1+\delta}{h_{11}}\sum_{i\geq2}\frac{F^{ii}-F^{11}}{h_{11}-h_{ii}}h_{11i}^{2}+\sum_{i\geq2}\frac{F^{11}h_{11i}^{2}}{h_{11}^{2}} \\
= {} & \frac{1+\delta}{h_{11}}\sum_{i\geq2}\frac{G^{11}-G^{ii}}{\eta_{ii}-\eta_{11}}h_{11i}^{2}
+\sum_{i\geq2}\frac{F^{11}h_{11i}^{2}}{h_{11}^{2}} \\
= {} & -\frac{1+\delta}{h_{11}}\sum_{i\geq2}G^{1i,i1}h_{11i}^{2}
+\sum_{i\geq2}\frac{F^{11}h_{11i}^{2}}{h_{11}^{2}}.
\end{split}
\end{equation}
Using (\ref{hat Q i}), the Cauchy-Schwarz inequality and $u_{i}=h_{ii}\langle X,e_{i}\rangle$, we have
\[
\begin{split}
\sum_{i\geq2}\frac{F^{11}h_{11i}^{2}}{h_{11}^{2}}
\leq {} & 2\sum_{i\geq2}\frac{F^{11}u_{i}^{2}}{(u-a)^{2}}+2A^{2}\sum_{i\geq2}F^{11}\langle X,e_{i}\rangle^{2} \\
\leq {} & C\sum_{i\geq2}F^{11}h_{ii}^{2}+CA^{2}F^{11}.
\end{split}
\]
Recalling that $|h_{ii}|\leq\delta h_{11}$ for $i\geq2$ and choosing $\delta$ sufficiently small, we see that
\[
C\sum_{i\geq2}F^{11}h_{ii}^{2}+CA^{2}F^{11}
\leq \frac{aF^{11}h_{11}^{2}}{8(u-a)}.
\]
It then follows that
\[
\sum_{i\geq2}\frac{F^{11}h_{11i}^{2}}{h_{11}^{2}}
\leq \frac{aF^{11}h_{11}^{2}}{8(u-a)}.
\]
Combining this with (\ref{Curvature estimate eqn 2}),
\begin{equation}\label{Curvature estimate eqn 3}
\begin{split}
\sum_{i\geq2}\frac{F^{ii}h_{11i}^{2}}{h_{11}^{2}}
\leq {} & -\frac{1+\delta}{h_{11}}\sum_{i\geq2}G^{1i,i1}h_{11i}^{2}+\frac{aF^{11}h_{11}^{2}}{8(u-a)} \\
\leq {} & -\frac{2}{h_{11}}\sum_{i\geq2}G^{1i,i1}h_{11i}^{2}+\frac{aF^{11}h_{11}^{2}}{8(u-a)}.
\end{split}
\end{equation}
Substituting (\ref{Curvature estimate eqn 1}) and (\ref{Curvature estimate eqn 3}) into (\ref{Inequality}), we obtain
\begin{equation}\label{Curvature estimate eqn 4}
\begin{split}
0 \geq {} & \frac{aF^{ii}h_{ii}^{2}}{4(u-a)}+\frac{A}{2}\sum_{i}F^{ii}-CA \\
= {} & \frac{aF^{ii}h_{ii}^{2}}{4(u-a)}+\frac{(n-1)A}{2}\sum_{i}G^{ii}-CA.
\end{split}
\end{equation}
It then follows that
\[
\sum_{i}G^{ii} \leq C.
\]
Combining this with \cite[Lemma 2.2]{HMW10}, we obtain
\[
F^{11} = \sum_{i\neq 1}G^{ii} \geq \frac{1}{C}.
\]
Substituting this into (\ref{Curvature estimate eqn 4}), we obtain
\[
h_{11} \leq C,
\]
as required.
\end{proof}

\section{Proof of Theorem \ref{cj-thm-2}}
In this section, we give the proof of Theorem \ref{cj-thm-2}. Since $M$ is star-shaped, we assume that $M$ is the radial graph of positive function $\rho$ on $\mathbb{S}^{n}$. So $X (x) = \rho (x) x$ is the position vector of $M$. We first prove the gradient estimate.

\begin{lemma}
\label{cj-lem2}
Suppose that $f$ satisfies \eqref{cj-7} and $\rho$ has positive lower and upper bounds. Then there exists a constant $C$ depending only on $n$, $k$, $\inf_{M}\rho$, $\sup_{M}\rho$, $\inf f$ and $\|f\|_{C^{1}}$ such that
\begin{equation}
\label{cj-lem2-1}
|\nabla \rho| \leq C,
\end{equation}
where $\nabla$ denotes the gradient on $\mathbb{S}^{n}$.
\end{lemma}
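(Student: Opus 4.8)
The plan is to use the condition \eqref{cj-7} together with the equation \eqref{cj-1} to set up a maximum principle argument for $|\nabla\rho|^2$ at an interior maximum point on $\mathbb{S}^n$. First I would switch to a more convenient radial coordinate. Following the standard trick (as in \cite{BK74,CNSIV}), I would introduce $\gamma(x) = \psi(\rho(x))$ where $\psi$ is chosen so that $\psi' = 1/\rho$, i.e.\ $\gamma = \log\rho$ (or $\gamma$ equal to a primitive of $1/\rho$). Then $\nabla\gamma = \nabla\rho/\rho$, and since $\rho$ has positive two-sided bounds, controlling $|\nabla\gamma|$ is equivalent to controlling $|\nabla\rho|$. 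In this coordinate the metric, second fundamental form, and unit normal have the clean expressions coming from \eqref{Expressions of g and h} and \eqref{Expression of nu}: writing $v = \sqrt{1+|\nabla\gamma|^2}$, one has $\nu = (x - \nabla\gamma)/v$ and $\langle X,\nu\rangle = \rho/v$, and the matrix $\eta$ (hence $\sigma_k(\lambda(\eta))$) becomes a fixed algebraic expression in $\gamma$, $\nabla\gamma$, $\nabla^2\gamma$ on $\mathbb{S}^n$.

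Next, I would let $w = \tfrac12|\nabla\gamma|^2$ (computed with $\hat g$) and suppose $w$ attains its maximum at $x_0 \in \mathbb{S}^n$. Choosing normal coordinates for $\hat g$ at $x_0$ and diagonalizing $\nabla^2\gamma$ at $x_0$, I would write down $0 = \nabla_i w = \gamma_j \gamma_{ji}$ and $0 \geq \mathcal{L}(w)$, where $\mathcal{L} = G^{ij}\partial_i\partial_j + (\text{lower order})$ is the linearization of the operator $\sigma_k^{1/k}(\lambda(\eta))$ at the solution, which is elliptic by $(\eta,k)$-convexity. Differentiating the equation $\sigma_k(\lambda(\eta)) = f(X,\nu)$ once in the direction $e_i$, contracting with $\gamma_i$, and using the first-order condition at $x_0$, the third-order terms are eliminated and one is left with an inequality relating $G^{ij}\gamma_i\gamma_j$-type quantities to $\nabla_X f$- and $\nabla_\nu f$-terms. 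The crucial input is condition \eqref{cj-7}: $\partial_\rho(\rho^k f)\le 0$ says exactly that the scaling term arising when one differentiates $f(X,\nu)$ in the radial direction has a favorable sign — it is this that defeats the otherwise uncontrolled growth in $|\nabla\gamma|$ at the maximum. A term proportional to $v$ (or a positive power of $w$) with a definite sign then forces $w(x_0) \le C$, where $C$ depends only on the listed quantities via $\inf f$, $\|f\|_{C^1}$, and the bounds on $\rho$ (which enter through $\langle X,\nu\rangle = \rho/v$ and through the $\hat g$-geometry of $\mathbb{S}^n$, i.e.\ through $n$).

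The main obstacle I anticipate is bookkeeping the first-derivative-of-$f$ terms correctly: since $f$ depends on both $X$ and $\nu$, and $\nu = (x-\nabla\gamma)/v$ itself depends on $\nabla\gamma$, differentiating $f(X,\nu)$ produces a $d_\nu f$ contribution contracted against $\nabla_i \nu = \nabla_i((x-\nabla\gamma)/v)$, which contains $\nabla^2\gamma$ and hence, after using the critical-point relation, further $|\nabla\gamma|^2$ factors. One must check that after applying \eqref{cj-7} the sign works out and that no term grows faster in $w$ than the good term does. The computation is of the same flavor as the classical CNS gradient estimate for star-shaped prescribed-curvature hypersurfaces, the only new feature being that the operator acts on $\lambda(\eta)$ rather than $\kappa$; but since $\eta$ is the (invertible, for $\eta \in \Gamma_k \subset \Gamma_1$) first Newton transformation of $h$, the ellipticity and the structure of the linearization are entirely parallel, so the argument goes through with the obvious modifications. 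I would also verify the lemma does not actually need the full $(\eta,k)$-convexity beyond what is used for ellipticity, and that $\inf_M u \ge 1/C$ (equivalently a lower bound on $\langle X,\nu\rangle$, i.e.\ an upper bound on $|\nabla\gamma|$) is precisely the conclusion, so there is no circularity.
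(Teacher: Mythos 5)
Your plan stalls at exactly the step that makes this lemma non-trivial, namely the terms coming from the $\nu$-dependence of $f$. Work with $\gamma=\log\rho$, $v=\sqrt{1+|\nabla\gamma|^2}$, so that $h_{ij}=\frac{\rho}{v}\big(\hat g_{ij}+\gamma_i\gamma_j-\gamma_{ij}\big)$ by \eqref{Expressions of g and h}. At a maximum of your test function $w=\tfrac12|\nabla\gamma|^2$ the critical relation $\gamma_j\gamma_{ji}=0$ gives $h_{ij}\gamma_j=\rho v\,\gamma_i$; hence, when you differentiate $f(X,\nu)$ along $\nabla\gamma$ (to trade the third-order terms in $G^{ij}\gamma_k\gamma_{kij}$ for derivatives of $f$), the contribution $(d_\nu f)$ contracted with $\nabla_{\nabla\gamma}\nu$ is of size $\rho v|\nabla\gamma|\,\|d_\nu f\|\sim v^2$ and carries no sign. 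Condition \eqref{cj-7} only governs the \emph{radial} $X$-derivative of $f$ and does nothing to absorb this term, while the bare quantity $\tfrac12|\nabla\gamma|^2$ (with no auxiliary function of $\rho$ or of $u=\langle X,\nu\rangle$) produces as ``good'' terms only $G^{pq}\gamma_{kp}\gamma_{kq}\geq0$, which has no quantitative lower bound, and bounded $d_Xf$-terms. So the assertion that ``a term proportional to $v$ with a definite sign then forces $w(x_0)\leq C$'' is precisely the claim that needs a proof, and with your choice of test function it is not clear any term of size $v^2$ with favorable sign is available; this is the known obstruction to gradient estimates for prescribed-curvature equations with general $\nu$-dependence, not mere bookkeeping.

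For comparison, the paper avoids this by a different test function, following \cite{GLM09}: since by \eqref{Expression of u} a positive lower bound on $u=\langle X,\nu\rangle$ is equivalent to the gradient bound, it maximizes $w=-\log u+\gamma(|X|^2)$ with $\gamma(t)=\alpha/t$. At the maximum either $X$ is parallel to $\nu$ (then $u=\rho\geq\inf_M\rho$ and we are done), or in a frame with $\langle X,e_i\rangle=0$ for $i\geq2$ the first-order condition \emph{determines} $h_{11}=2\gamma'u<0$ and $h_{1i}=0$; consequently the only $d_\nu\tilde f$ term appearing is $h_{11}\langle X,e_1\rangle(d_\nu\tilde f)(e_1)/u=2\gamma'\langle X,e_1\rangle(d_\nu\tilde f)(e_1)$, bounded by $C\alpha$ --- the dangerous curvature factor is replaced by the known quantity $2\gamma'u$, which is the mechanism your setup lacks. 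Then \eqref{cj-7} handles the radial part of $d_X\tilde f$, $h_{11}<0$ together with $H>0$ yields $F^{11}\geq\frac{1}{2(n-1)}\sum_iF^{ii}$ as in \eqref{cj-15}, and with the Maclaurin bound $\sum_iF^{ii}\geq1/C$ from \eqref{Case 2 eqn 3} one reaches $0\geq(\alpha^2/C-C\alpha)\sum_iF^{ii}-C\alpha$, a contradiction for $\alpha$ large. If you wish to rescue your route you would have to build a function of $u$ (or of $\rho$) into the test quantity so as to reproduce this cancellation, which essentially turns it back into the paper's argument.
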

\begin{proof}
By \eqref{Expression of u}, it suffices to obtain a positive lower bound of $u$. As in \cite{GLM09}, we consider the following quantity
\[
w = - \log u + \gamma (|X|^2),
\]
where $\gamma$ is a single-variable function to be determined later. Suppose that the maximum of $w$ is achieved at $X_0 \in M$. If the vector $X_0$ is not parallel to the outer normal vector of $M$ at $X_0$, we can choose the local orthonormal frame $\{e_1,e_2,\cdots, e_n\}$ near $X_0$ such that
\begin{equation}\label{C1 eqn 2}
\langle X, e_1\rangle \neq 0 \ \, \text{and} \ \, \langle X, e_i\rangle = 0 \ \, \text{for $i \geq 2$}.
\end{equation}
Using Weingarten equation, we obtain
\[
u_{i} = \sum_{j}h_{ij}\langle X,e_{j}\rangle = h_{i1}\langle X,e_{1}\rangle.
\]
Therefore, at $X_0$, we have
\begin{equation}
\label{C1 eqn 4}
0 = w_i = - \frac{u_i}{u} + 2 \gamma' \langle X, e_i\rangle
= -\frac{h_{i1}\langle X,e_{1}\rangle}{u} + 2 \gamma' \langle X, e_i\rangle,
\end{equation}
so that $h_{11} = 2 \gamma' u$ and $h_{1i} = 0$ for $i \geq 2$. Furthermore, after rotating $\{e_2,e_3,\cdots, e_n\}$, we assume that $(h_{ij} (X_0))$ is diagonal. For convenience, we use the same notations as in Section \ref{Section curvature estimate}. By \eqref{C1 eqn 4}, we obtain $\frac{u_{i}^{2}}{u^{2}}=4(\gamma')^{2}\langle X,e_{i}\rangle^{2}$. Using the maximum principle,
\begin{equation}
\label{cj-9}
\begin{split}
0 \geq {} & F^{ii} w_{ii} \\
= {} & F^{ii} \left(-\frac{u_{ii}}{u} + \frac{u_i^2}{u^2} + \gamma''(|X|^{2})_{i}^2 + \gamma' (|X|^2)_{ii}\right) \\
= {} & -\frac{F^{ii}u_{ii}}{u}+4(\gamma')^{2}F^{ii}\langle X,e_{i}\rangle^{2}
+ 4 \gamma'' F^{ii}\langle X, e_i\rangle^2 + \gamma'F^{ii}(|X|^2)_{ii} \\
= {} & -\frac{F^{ii}u_{ii}}{u}+4 (\gamma'^2 + \gamma'') F^{11} \langle X, e_1\rangle^2 + \gamma' F^{ii} (|X|^{2})_{ii}.
\end{split}
\end{equation}
To deal with terms $-\frac{F^{ii}u_{ii}}{u}$ and $\gamma' F^{ii} (|X|^{2})_{ii}$, we apply the similar argument of Lemma \ref{Calculation} and obtain
\[
F^{ii}u_{ii} = \langle X,e_{1}\rangle(d_{X}\ti{f})(e_{1})+h_{11}\langle X,e_{1}\rangle(d_{\nu}\ti{f})(e_{1})-uF^{ii}h_{ii}^{2}+\ti{f}
\]
and
\[
F^{ii} (|X|^{2})_{ii} = 2\sum_{i}F^{ii}-2u\ti{f},
\]
where $\ti{f}=f^{\frac{1}{k}}$. Substituting these into \eqref{cj-9} and using $h_{11} = 2 \gamma' u$,
\begin{equation}
\label{cj-11}
\begin{split}
0 \geq {} & - \frac{1}{u} \left(\langle X, e_1\rangle (d_X \ti{f}) (e_1) + \ti{f}\right) - 2 \gamma'\langle X, e_1\rangle(d_{\nu}\ti{f})(e_{1})  \\
   & + F^{ii} h_{ii}^2 + 4 (\gamma'^2 + \gamma'') F^{11} \langle X, e_1\rangle^2 + 2 \gamma' \sum_{i} F^{ii} - 2 u\gamma' \ti{f}.
\end{split}
\end{equation}
By \eqref{C1 eqn 2}, at $X_{0}$, we have
\[
X = \langle X, e_1\rangle e_1 + \langle X, \nu\rangle \nu,
\]
which implies
\begin{equation}
\label{cj-12}
(d_X \ti{f}) (X) = \langle X, e_1\rangle (d_X \ti{f}) (e_1) + \langle X, \nu\rangle (d_X \ti{f}) (\nu).
\end{equation}
From \eqref{cj-7}, \eqref{cj-12} and $X(x)=\rho(x) x$, we see that
\[
\begin{split}
0 \geq {} &  \frac{\partial}{\partial \rho} \left(\rho^k f (X, \nu)\right) = \frac{\partial}{\partial \rho} \left(\rho^k \ti{f}^{k} (X, \nu)\right) \\
= {} & k(\rho\ti{f})^{k-1}\left(\ti{f}+(d_{X}\ti{f})(X) \right) \\
= {} & k(\rho\ti{f})^{k-1}\left(\ti{f}+ \langle X, e_1\rangle (d_X \ti{f}) (e_1) + \langle X, \nu\rangle (d_X \ti{f}) (\nu)\right).
\end{split}
\]
It then follows that
\[
-\left(\langle X, e_1\rangle (d_X \ti{f}) (e_1) + \ti{f}\right)
\geq  \langle X, \nu\rangle (d_X \ti{f}) (\nu) = u (d_X \ti{f}) (\nu).
\]
Substituting this into \eqref{cj-11}, we obtain
\begin{equation}
\label{cj-14}
\begin{split}
0 \geq {} & (d_X \ti{f}) (\nu) - 2 \gamma'\langle X, e_1\rangle (d_{\nu}\ti{f})(e_{1}) \\
   & + F^{ii} h_{ii}^2 + 4 (\gamma'^2 + \gamma'') F^{11} \langle X, e_1\rangle^2 + 2 \gamma' \sum_{i} F^{ii} - 2 u \gamma'\ti{f}.
\end{split}
\end{equation}
It is clear that
\[
|X|^{2} = \rho^{2} \geq \inf_{M}\rho^{2} > 0.
\]
Without loss of generality, we assume that
\begin{equation}\label{C1 eqn 5}
\langle X,e_{1}\rangle^{2} \geq \frac{1}{2}\inf_{M}\rho^{2}.
\end{equation}
Otherwise we obtain the positive lower bound of $u$ at $X_{0}$ directly:
\[
u^2 = \langle X, \nu\rangle^2
= |X|^2 - \langle X, e_1\rangle^2 \geq \frac{1}{2}\inf_{M}\rho^{2}.
\]
Now we choose $\gamma (t) = \frac{\alpha}{t}$, where $\alpha$ is a constant to be determined later. Recalling that $h_{11}=2\gamma'u$ at $X_{0}$, we have $h_{11}(X_{0})< 0$. Using $H > 0$, there exists $2 \leq k \leq n$ such that $h_{kk} (X_0) > 0$. Combining this with the definitions of $\eta_{ii}$ and $G^{ii}$,
\[
\eta_{ii}< \eta_{11} \  \text{and} \ G^{ii}\geq G^{11}.
\]
It then follows that
\begin{equation}
\label{cj-15}
F^{11} = \sum_{j \neq 1} G^{jj} \geq \frac{1}{2} \sum_{i} G^{ii} = \frac{1}{2(n-1)}\sum_{i}F^{ii}.
\end{equation}
Combining \eqref{cj-14},  \eqref{C1 eqn 5} and \eqref{cj-15},
\[
0 \geq \left(\frac{\alpha^{2}}{C}-C\alpha\right)\sum_{i}F^{ii}-C\alpha,
\]
where $C$ is a constant depending only on $n$, $k$, $\inf_{M}\rho$, $\sup_{M}\rho$ and $\|\ti{f}\|_{C^{1}}$. Using (\ref{Case 2 eqn 3}) and choosing $\alpha$ sufficiently large, we obtain a contradiction. This shows that $X$ is parallel to $\nu$ at $X_0$. Hence, at $X_{0}$, we obtain
\[
u = \langle X,\nu \rangle = \rho \geq \inf_{M}\rho,
\]
as required.
\end{proof}

Now we use the continuity method as in \cite{CNSIV} to prove Theorem \ref{cj-thm-2}.

\begin{proof}[Proof of Theorem \ref{cj-thm-2}]
For $t\in[0,1]$, we consider the following family of functions
\[
f^t (X, \nu) = t f (X, \nu) + (1-t) C_n^k (n-1)^k \left[\frac{1}{|X|^k} + \ve \left(\frac{1}{|X|^k} - 1\right)\right],
\]
where the constant $\ve$ is small sufficiently such that
\[
\min_{r_1 \leq \rho \leq r_2}\left[\frac{1}{\rho^k} + \ve \left(\frac{1}{\rho^k} - 1\right)\right] \geq c_0 > 0
\]
for some positive constant $c_0$. It is easy to see that $f^t (X, \nu)$ satisfies \eqref{cj-6} and \eqref{cj-7} with strict inequalities for
$0 \leq t < 1$.

Let $M_{t}$ be the solution of the equation
\[
\sigma_{k}(\lambda(\eta)) = f^{t}(X_{t},\nu_{t}),
\]
where $X_{t}$ and $\nu_{t}$ are position vector and unit outer normal of $M_{t}$ respectively. Clearly, when $t=0$, we have $M_{0}=\mathbb{S}^{n}$ and $X_{0}=x$. For $t\in(0,1)$, suppose that $x_{0}$ is the maximum point of the function $\rho_{t}=|X_{t}|$. Thus at $x_{0}$, by (\ref{Expressions of g and h}), we have
\[
g_{ij} = \rho^2_t \hat{g}_{ij}, \quad  h_{ij} = -(\rho_t)_{ij} + \rho_t\hat{g}_{ij}.
\]
It then follows that
\[
\eta_{ij} = Hg_{ij} - h_{ij} \geq (n-1) \rho_t \hat{g}_{ij},
\]
which implies
\[
\sigma_{k}(\lambda(\eta)) \geq \sigma_k \left(\frac{n-1}{\rho_t} (1, \cdots, 1)\right) = \frac{C_n^k (n-1)^k}{\rho_{t}^{k}}.
\]
On the other hand, at $x_{0}$, the unit outer normal $\nu_t$ is parallel to $X_t$, i.e., $\nu_{t} = \frac{X_{t}}{|X_{t}|}$. If $\rho_{t}(x_{0})=r_{2}$, then we obtain
\[
\frac{C_n^k (n-1)^k}{r_2^k} > f \left(X_t, \frac{X_t}{|X_t|}\right) = f (X_t, \nu_t) = \sigma_k (\lambda(\eta))
\geq  \frac{C_n^k (n-1)^k}{r_2^k},
\]
which is a contradiction. This shows $\sup_{M_{t}}\rho_{t}\leq r_{2}$. Similarly, we get $\inf_{M_{t}}\rho_{t}\geq r_{1}$. Using Lemma \ref{cj-lem2} and Theorem \ref{Curvature estimate}. we obtain the $C^{1}$ and $C^{2}$ estimates. Higher order estimates follows from the Evans-Krylov theory. Applying the similar argument of \cite{CNSIV}, we get the existence and uniqueness of solution to the equation \eqref{cj-1}.
\end{proof}

\section{Proof of Theorem \ref{Interior estimate}}
Theorem \ref{Interior estimate} can be proved by the similar argument of Theorem \ref{Curvature estimate}. For the reader's convenience, we give a sketch here.

\begin{proof}[Proof of Theorem \ref{Interior estimate}]
By the maximum principle, we assume without loss of generality that $\vp<0$ in $\Omega$. For $x\in\Omega$ and unit vector $\xi$, we consider the function
\[
Q(x,\xi) = \log\vp_{\xi\xi}+\frac{a}{2}|\nabla\vp|^{2}+\frac{A}{2}|x|^{2}+\beta\log(-\vp),
\]
where $a$, $A$ and $\beta$ are constants to be determined later. Suppose that $Q$ achieves its maximum at $(x_{0},\xi_{0})$. Near $x_{0}$, we choose coordinate system such that
\[
\xi_{0} = (1,0,\cdots,0), \ \
\vp_{ij} = \delta_{ij}\vp_{ii}, \ \
\vp_{11} \geq \vp_{22} \geq \cdots \geq \vp_{nn} \ \
\text{at $x_{0}$}.
\]
Thus the new function defined by
\[
\hat{Q}(x) = \log\vp_{11}+\frac{a}{2}|\nabla\vp|^{2}+\frac{A}{2}|x|^{2}+\beta\log(-\vp)
\]
has a maximum at $x_{0}$. Thus,
\begin{equation}\label{Interior estimate eqn 1}
0 = \frac{\vp_{11i}}{\vp_{11}}+a\vp_{i}\vp_{ii}+Ax_{i}+\frac{\beta\vp_{i}}{\vp}.
\end{equation}
Using the similar notations in Section 3, at $x_{0}$, we compute (cf. Lemma \ref{Calculation} and \ref{Bad term})
\begin{equation}\label{Interior estimate eqn 2}
\begin{split}
0 \geq {} & -\frac{2}{\vp_{11}}\sum_{i\geq2}G^{1i,i1}\vp_{11i}^{2}-\frac{F^{ii}\vp_{11i}^{2}}{\vp_{11}^{2}}+\frac{aF^{ii}\vp_{ii}^{2}}{2} \\
& +\frac{A}{2}\sum_{i}F^{ii}-CA-\frac{\beta F^{ii}\vp_{i}^{2}}{\vp^{2}}+\frac{C\beta}{\vp}.
\end{split}
\end{equation}
Combining (\ref{Interior estimate eqn 1}) and (\ref{Interior estimate eqn 2}), and choosing the constant $a$ sufficiently small, we obtain (cf. Lemma \ref{Partial curvature estimate})
\[
|\vp_{ii}| \leq -\frac{C_{a,A,\beta}}{\vp} \ \ \text{for $i\geq2$},
\]
where $C_{a,A}$ is a uniform constant depending on $a$, $A$ and $\beta$. Without loss of generality, we assume that
\begin{equation}\label{Interior estimate eqn 3}
|\vp_{ii}| \leq \delta\vp_{11} \ \ \text{for $i\geq2$},
\end{equation}
where $\delta$ is a constant to be determined later.

On the other hand, by (\ref{Interior estimate eqn 1}) and the Cauchy-Schwarz inequality, we have
\[
-\frac{F^{11}\vp_{111}^{2}}{\vp_{11}^{2}} \geq -Ca^{2}F^{11}\vp_{11}^{2}-CA^{2}F^{11}-\frac{C\beta^{2}F^{11}}{\vp^{2}}
\]
and
\[
-\sum_{i\geq2}\frac{\beta F^{ii}\vp_{i}^{2}}{\vp^{2}}
\geq -\frac{3}{\beta}\sum_{i\geq2}\frac{F^{ii}\vp_{11i}^{2}}{\vp_{11}^{2}}
-\frac{Ca^{2}}{\beta}\sum_{i\geq2}F^{ii}\vp_{ii}^{2}
-\frac{CA^{2}}{\beta}\sum_{i\geq2}F^{ii}.
\]
Substituting these into (\ref{Interior estimate eqn 2}),
\[
\begin{split}
0 \geq {} & -\frac{2}{\vp_{11}}\sum_{i\geq2}G^{1i,i1}\vp_{11i}^{2}
-\left(1+\frac{3}{\beta}\right)\sum_{i\geq2}\frac{F^{ii}\vp_{11i}^{2}}{\vp_{11}^{2}} \\
& +\left(\frac{a}{2}-\frac{Ca^{2}}{\beta}\right)F^{ii}\vp_{ii}^{2}+\left(\frac{A}{2}-\frac{CA^{2}}{\beta}\right)\sum_{i}F^{ii} \\
& -Ca^{2}F^{11}\vp_{11}^{2}-CA^{2}F^{11}-\frac{C\beta^{2}F^{11}}{\vp^{2}} \\
& -CA-\frac{\beta F^{11}\vp_{1}^{2}}{\vp^{2}}+\frac{C\beta}{\vp}.
\end{split}
\]
Choosing $\beta$ sufficiently large and decreasing $a$ if needed, we see that
\[
\begin{split}
0 \geq {} & -\frac{2}{\vp_{11}}\sum_{i\geq2}G^{1i,i1}\vp_{11i}^{2}
-\left(1+\frac{3}{\beta}\right)\sum_{i\geq2}\frac{F^{ii}\vp_{11i}^{2}}{\vp_{11}^{2}} \\
& +\frac{aF^{ii}\vp_{ii}^{2}}{4}-\left(CA^{2}+\frac{C\beta^{2}}{\vp^{2}}\right)F^{11}+\frac{C\beta}{\vp}-CA.
\end{split}
\]
Using (\ref{Interior estimate eqn 3}) and choosing $\delta$ sufficiently small, we obtain (cf. the first inequality of \eqref{Curvature estimate eqn 3})
\[
\left(1+\frac{3}{\beta}\right)\sum_{i\geq2}\frac{F^{ii}\vp_{11i}^{2}}{\vp_{11}^{2}}
\leq -\frac{2}{\vp_{11}}\sum_{i\geq2}G^{1i,i1}\vp_{11i}^{2}+\frac{a}{6}F^{ii}\vp_{ii}^{2}+\frac{C\beta^{2}F^{11}}{\vp^{2}}.
\]
It then follows that
\begin{equation}\label{Interior estimate eqn 4}
0 \geq \frac{F^{11}\vp_{11}^{2}}{C}-\frac{CF^{11}}{\vp^{2}}+\frac{C}{\vp}-C.
\end{equation}
By  (\ref{Interior estimate eqn 3}) and \cite[Lemma 2.2]{HMW10},
\[
F^{11}\vp_{11}^{2} \geq G^{nn}\vp_{11}^{2} \geq \frac{G^{nn}\eta_{nn}\vp_{11}}{C} \geq \frac{\vp_{11}}{C}.
\]
Combining this with (\ref{Interior estimate eqn 4}), we obtain $(-\vp)^{\beta}\vp_{11}\leq C$, as required.
\end{proof}

\end{document}